\newtheorem{theorem}{Theorem}[section]
\newtheorem{lemma}[theorem]{Lemma}
\newcommand{\tr}{\operatorname{tr}}
\newcommand{\hid}[1]{}
\newcommand{\ourtitle}{A signal separation technique for sub-cellular imaging using dynamic optical coherence tomography}
\title{\ourtitle}
\author{Habib Ammari\thanks{\footnotesize Department of Mathematics,
ETH Z\"urich,
R\"amistrasse 101, CH-8092 Z\"urich, Switzerland (habib.ammari@math.ethz.ch, francisco.romero@sam.math.ethz.ch).} \and Francisco Romero\footnotemark[1]  \and Cong Shi\thanks{\footnotesize Computational Science Center,
University of Vienna, Oskar Morgenstern-Platz 1, 1090 Vienna, Austria (cong.shi@univie.ac.at).}}
\begin{document}
\date{}
\maketitle

\begin{abstract}
This paper aims at imaging the dynamics of metabolic activity of cells. Using dynamic optical coherence tomography, we introduce a new multi-particle dynamical model to simulate the movements of the collagen and the cell metabolic activity and develop an efficient signal separation technique for sub-cellular imaging. We perform a singular-value decomposition of the dynamic optical images to isolate the intensity of the metabolic activity. We prove that the largest eigenvalue of the associated Casorati matrix corresponds to the collagen. We present several numerical simulations to illustrate and validate our approach. \end{abstract}

\medskip

\bigskip

\noindent {\footnotesize Mathematics Subject Classification
(MSC2000): 92C55, 78A46, 65Z05}

\noindent {\footnotesize Keywords: Doppler optical coherence tomography, signal separation, spectral analysis, singular value decomposition, dynamic cell imaging.}

\section{Introduction}

Since dynamic properties are essential for a disease prognosis and a selection of treatment options, a number of methods to explore these dynamics has been developed. When optical imaging methods are used to observe cell-scale details of a tissue, the highly-scattering collagen usually dominates the signal, obscuring the intra-cellular details. A challenging problem is to remove the influence of the collagen in order to have a better imaging inside the cells.

There have been many studies on optical imaging to extract useful information. In \cite{lee2012dynamic} the authors use stochastic method, which follows from a probabilistic model for particle movements, and then they express the autocorrelation function of the signal in terms of some parameters including different components of the velocity and the fraction of moving particles. Those parameters are then estimated using a fitting algorithm. In \cite{joo2010diffusive, li2009intracellular}, the autocorrelation function of the signal can be written as a complex-valued exponential function of the particle displacements. Through the relation between the real and imaginary parts of this autocorrelation function, the authors analyze the temporal autocorrelation on the complex-valued signals to obtain the mean-squared displacement (MSD) and also time-averaged displacement (TAD) (which is the velocity) of scattering structures. Very recently, in \cite{apelian2016dynamic}, Apelian et. al. use difference imaging method, which consists in directly removing the stationary parts from the images by taking differences or standard deviations. The motivation of this paper comes from \cite{apelian2016dynamic}.

Some researchers use Doppler optical coherence tomography to obtain high resolution tomographic images of static and moving constituents simultaneously in highly scattering biological tissues, for example, \cite{chen1999optical} and in \cite[Chapter 21]{drexler2008optical}.

In this paper, using dynamic optical coherence tomography we introduce a signal separation technique for sub-cellular imaging and give a detailed mathematical analysis of extracting useful information. This includes giving a new multi-particle dynamical model to simulate the movement of the collagen and metabolic activity, and also providing some results relating the eigenvalues and the feasibility of using singular value decomposition (SVD) in optical imaging, which as far as we know is original. 

The paper has three main contributions. First, we give a new model as an extension of the single particle optical Doppler tomography, which allows us to justify the SVD approach for the separation between the collagen signal and metabolic activity signal. Then we perform eigenvalue analysis for the operator with the intensity as an integral kernel, and prove that the largest eigenvalue corresponds to the collagen. This means that using a SVD of the images and removing the part corresponding to the largest eigenvalue is a viable method for removing the influence of collagen signals. Finally, based on SVD, we give a new method for isolating the intensity of the metabolic activity.

The paper is structured as follows. In Section 2 we introduce our multi-particle dynamical model based on a classical model in \cite{drexler2008optical}. In Section 3, we discuss the forward operator with total signal as its integral kernel, and give its eigenvalue analysis, showing that the part corresponding to the collagen signal have rank one, which provides the theoretical foundation for using SVD. In Section 4, we discuss the mathematical rationality for using a SVD method and the method of isolating the metabolic signal. In Section 5 we give some numerical experiments. Some concluding remarks are presented in the final section.

\section{The dynamic forward problem}

Optical Coherence Tomography (OCT) is a medical imaging technique that uses light to capture high resolution images of biological tissues by measuring the time delay and the intensity of backscattered or back reflected light coming from the sample. The research on OCT has been growing very fast for the last two decades. We refer the reader, for instance, to \cite{huang1991optical, fercher1996optical, fercher2003optical, podoleanu2014optical, schmitt1999optical, tomlins2005theory}.
This imaging method has been continuously improved in terms of speed, resolution and sensitivity. It has also seen a variety of extensions aiming to assess functional aspects of the tissue in addition to morphology. One of these approaches is Doppler OCT (called ODT), which aims at visualizing movements in the tissues (for example, blood flows). ODT lies on the identical optical design as OCT, but additional signal processing is used to extract information encoded in the carrier frequency of the interferogram.

The purpose of this paper is to analyze the mathematics of ODT in the context of its application for imaging sub-cellular dynamics. We prove that a signal separation technique performs well and allows imaging of sub-cellular dynamics. We refer the reader to \cite{separation1,separation2,separation3} for recently developed signal separation approaches in different biomedical imaging frameworks. These include ultrasound imaging, photoacoustic imaging, and electrical impedance tomography.

\subsection{Single particle model}

We first consider a single moving particle. In  \cite[Chapter 21]{drexler2008optical}, the optical Doppler tomography is modeled as follows. Assume that there is one moving particle at a point $x$ in the sample $\Omega$ and denote by $\nu$ the z-component of its velocity. Then the ODT signal generated by this particle is given by
\begin{equation}\label{originalmodel}
\Gamma_{ODT}(x, t)=2\int_{0}^{\infty}S_{0}(\omega)K(x, \omega)K_{R}(x, \omega)\cos(2\pi\omega (\tau+\frac{\Delta}{c})+2\pi\omega \frac{2\bar{n}vt}{c})d\omega,
\end{equation}
where $\omega$ is the frequency, $S_0(\omega)$ is the spectral density of the light source, $K_(x, \omega)$ and $K_{R}(x, \omega)$ are the reflectivities of the sample and the reference mirror respectively, $\bar{n}$ is the index of refraction, $c$ is the speed of the light, $\tau$ is the time delay on the reference arm, and $\Delta$ is the path difference between the reference arm and sample arm.

Since $\cos$ is an even function, the above integral can be rewritten as
\begin{equation}\label{single_particle}
\Gamma_{ODT}(x, t)=\int_{-\infty}^{\infty}S_{0}(\omega)K(x, \omega)K_{R}(x, \omega)e^{2\pi\omega i(\tau+\frac{\Delta}{c})+2\pi\omega i\frac{2\bar{n}vt}{c}}d\omega .
\end{equation}

\begin{figure}[h!]
	\centering
	\begin{minipage}[c]{.45\textwidth}
		\centering
		\tiny a
	\end{minipage}
	\hspace{.5cm}
	\begin{minipage}[c]{.45\textwidth}
		\centering
		\tiny b
	\end{minipage}
	\\
	\begin{minipage}[c]{.55\textwidth}
		\centering
		\includegraphics[width=\linewidth]{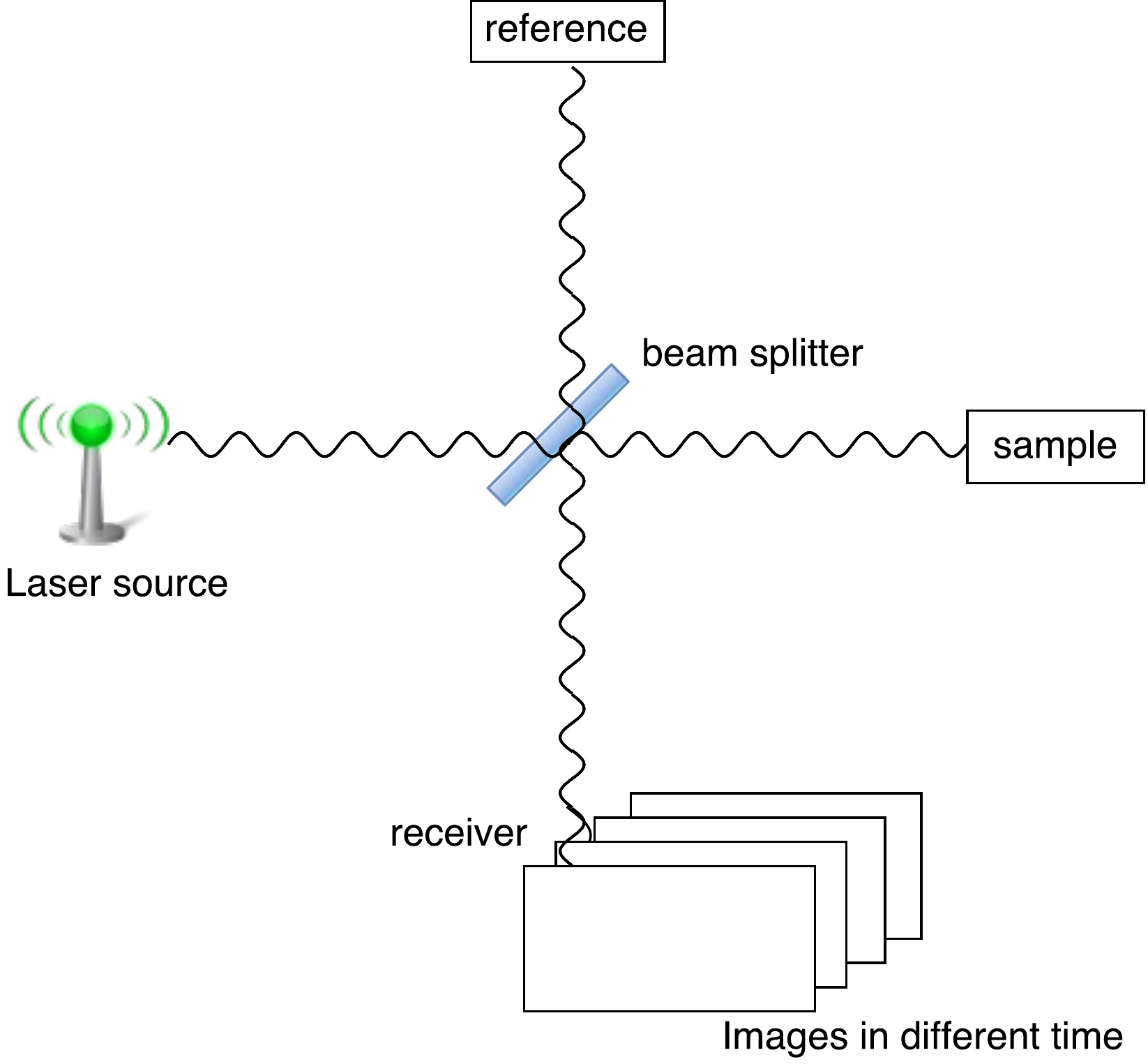}
	\end{minipage}
	\hspace{.5cm}
	\begin{minipage}[c]{.32\textwidth}
		\centering
		\includegraphics[width=\linewidth]{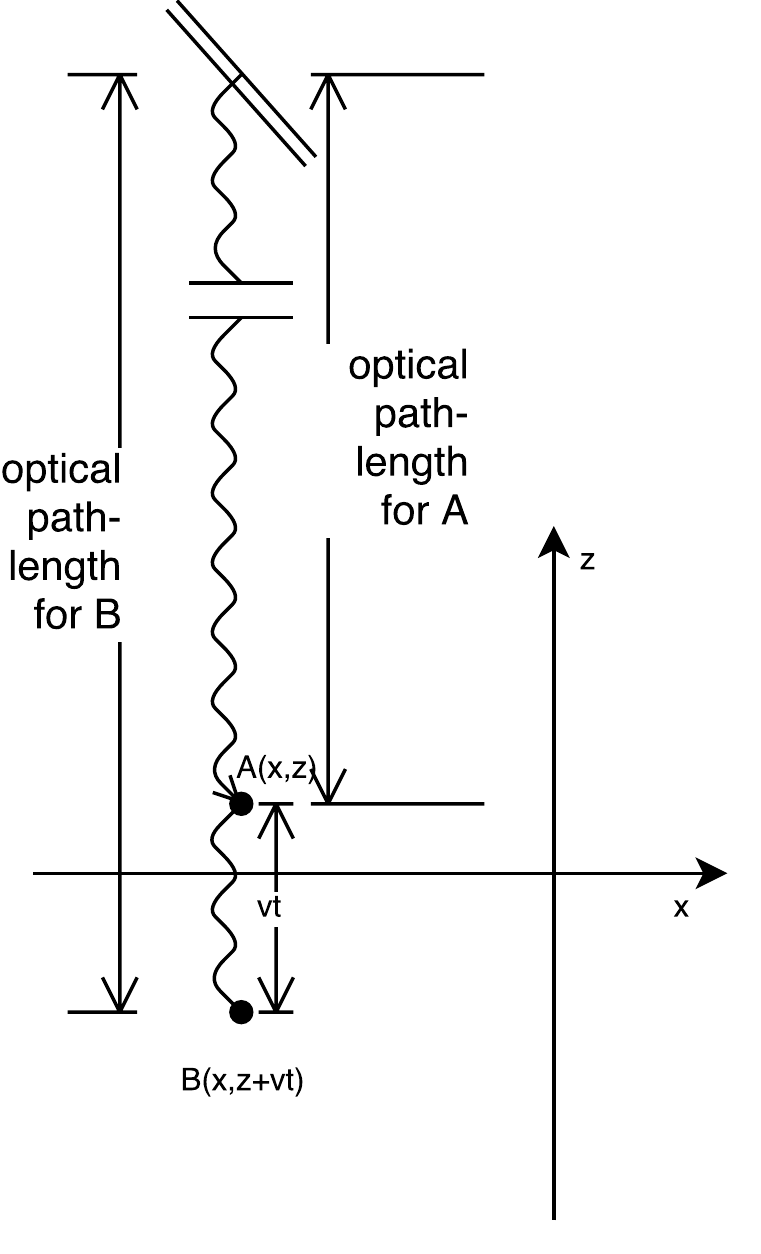}
	\end{minipage}
	\caption{a) Illustration of the imaging setup.
b)  A particle moves from $A$ to $B$ covering a distance of $vt$. When the particle is at $B$, the light travels an additional distance of $2vt$ inside a medium with refrative index $\bar{n}$, so the effective path-length of the sample arm increases by $2\bar{n}vt$.}
	\label{fig:singleparticle}
\end{figure}

To give an explanation for the exponential term of the above formula, we choose a suitable coordinate system such that the beam propagates along the $z$-direction, and suppose that the particle moves in this direction from point $A$ to point $B$ with velocity $v$, which also means covering a distance of $vt$ (see Figure \ref{fig:singleparticle}). Physically, the received signal $\Gamma_{ODT}$ is determined by the effective path-length difference between the sample and reference arms. In addition, for this moving particle the effective path-length difference is represented by the quantity $c\tau+\Delta+2\bar{n}vt$, which could also be seen as the z-coordinate of the particle (see Figure \ref{fig:singleparticle}).

Note that (\ref{single_particle}) is only applicable to a single particle at $x$ moving with a constant velocity $v$. For a particle with a more general movement, the path-length difference is no longer a linear function with respect to $t$. Nevertheless, we define $\varphi(t)$ as the $z$-coordinate of the particle at time $t$, which is a generation of $c\tau+\Delta+2\bar{n}vt$. Also, in our case the reference arm is a mirror, so without loss of generality, we make the assumption that $K_{R}(x, \omega)=1$. Then the following expression for signal $\Gamma_{ODT}(x, t)$ holds
\begin{equation*}\label{single_particle_2}
\Gamma_{ODT}(x, t)=\int_{-\infty}^{\infty}S_{0}(\omega)K(x, \omega)e^{2\pi\omega i(\frac{2\bar{n}}{c}\varphi(t))}d\omega.
\end{equation*}

This is not just a simplification of the model (\ref{originalmodel}), but also a small modification, since the particles with regular and random movements produce difference signals. Here we look into more details of particle movements. For the sake of simplicity, we assume that the collagen particles move with a constant speed $v$, so $\varphi(t)=\varphi(0)+vt$. On the other hand, for the particles belonging to the metabolic activity part, $\varphi(t)$ behaves as a random function, since we do not have much information with regard to them.

{\bf Remark.} Formula (\ref{single_particle_2}) is derived in \cite{drexler2008optical} by considering what is essentially our $\phi(t)$ (written as $\Delta_d$ there, see formula (21.11) and (21.15) of \cite{drexler2008optical}.) This justifies our treatment for general particles above. We emphasize that we generalized the model in \cite{drexler2008optical} to accommodate particles with variable velocities.

\subsection{Multi-particle dynamical model}

We have seen the effect of the image $\Gamma_{ODT}(x, t)$ for one moving particle. We now consider the more realistic case of a medium (could be cell or tissue) with a large number of particles in motion. In actual imaging, for each pixel which we denote also by $x$, there would be many particles, all with different movement patterns.

We choose an appropriate coordinate system, such that for any particle on the plane $z=0$, its effective path-length difference is zero. Let $L$ be the coherence length. Physically, only the particles with path-length difference smaller than $L$, or equivalently $z\in[-L,L]$, will be present in the image. In fact, if the differences between the two arms are larger than the coherence length, then the lights from two arms do not interfere anymore, and thus do not contribute to the received signal. This means that the imaging region is a "thin slice" within the sample with thickness $2L$ (see Figure \ref{fig:multiparticle}). Then we divide the slice into small regions, such that each region corresponds to a pixel of the final image. See Figure \ref{fig:multiparticle} for the imaged small region, which is given by $x\times[-L,L]$, and for the correspondence between them and pixels of the final image.

\begin{figure}[h!]
	\centering
	\includegraphics[width=0.8\linewidth]{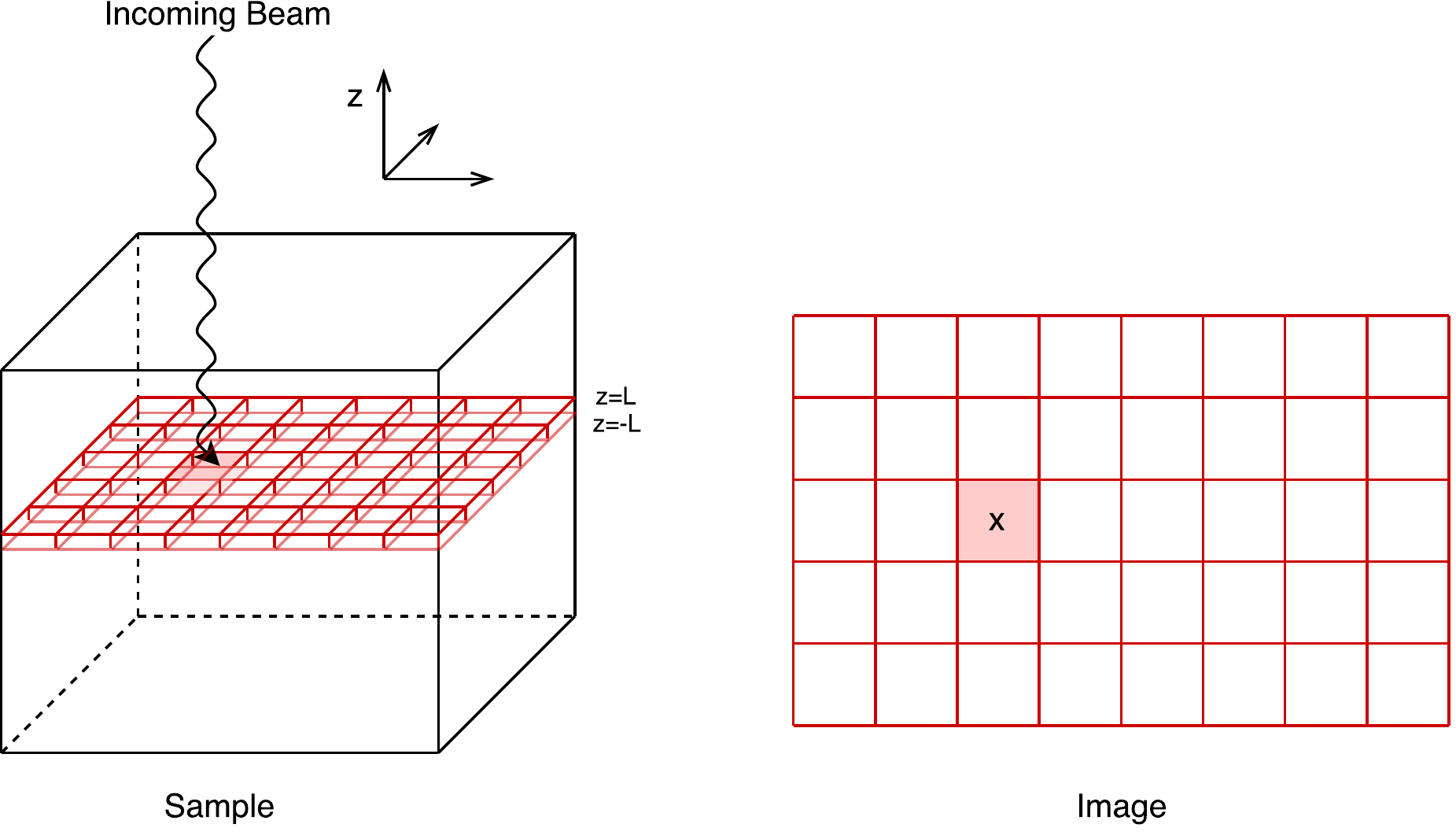}
	\caption{One "slice" in the sample, and its division into small regions corresponding to the pixels of the image.}
	\label{fig:multiparticle}
\end{figure}

Since there are many particles in this region, we describe their distribution using a density function $p$. Moreover, for any function $f(z)$, we have that the integral $\int^{z_2}_{z_1}f(z)p(x, z, t) dz$ is equal to the sum of $f(z)$ over all particles in $x\times [z_1,z_2]$. We know that the received light intensity in the small region $x\times[-L,L]$ could be seen as the sum of light intensity over all particles in this region. Therefore for uniform medium, we can write it as an integral in terms of the density function $p(x,z,t)$,

\begin{equation*}
\Gamma_{ODT}(x, t)=\int_{-\infty}^{\infty}\int_{-L}^{L}S_{0}(\omega)K(x, \omega)e^{2\pi\omega i(\frac{2\bar{n}}{c}z)}p(x, z, t)d\omega \,dz,
\end{equation*}
noting that the reflectivity coefficient $K$ must be the same for all involved particles.
According to the definition of $p(x, z, t)$, we consider it as the sum of the density function of collagen particles and the density function of metabolic activity particles, namely,
\begin{equation}\label{psplit}
p(x, z, t)=p_c(x, z, t)+p_m(x, z, t).
\end{equation}
Consequently, their respective reflectivities will be denoted $K_c$ and $K_m$, giving us the ODT measurements formula
\begin{equation}\label{gammasplit}
\Gamma_{ODT}(x, t)=\Gamma^c_{ODT}(x, t)+\Gamma^m_{ODT}(x, t),
\end{equation}
where $\Gamma^c_{ODT}(x, t)$ corresponds to the collagen signal and $\Gamma^m_{ODT}(x, t)$ corresponds to the metabolic activity signal, with formulas
\begin{equation}\label{smallareamodel}
\Gamma^j_{ODT}(x, t)= \int_{-\infty}^{\infty}\int_{-L}^{L}S_{0}(\omega) K_j(x, \omega)p_j(x, z, t)e^{2\pi\omega i(\frac{2\bar{n}}{c}z)}d\omega \,dz, \quad \text{for }j\in \left\{ c,m \right\}.
\end{equation}
Physically, since the collagen moves as a whole, we could assume that the collagen particles move with one uniform (and very small) velocity $v_0$, which means any such particles will be at position $z+v_0t$ at time $t$. Let $q_c(x, z)$ denote the density function of all the collagen particles inside area $x$ with initial vertical position $z$. Then we have
\begin{equation}\label{collagen-move}
p_c(x, z+v_0t, t)=q_c(x, z).
\end{equation}
Furthermore, from this expression we could see when $t=0$, $q_c(x,z)=p_c(x,z,0)$.

In the case of metabolic activity we do not assume any conditions on the density function $p_m(x, v, z)$, because there is no physical law of motions for us to use. In the numerical experiments, because of the large number of particles, a random medium generator is used to simulate the particle distribution while keeping the computational cost low.

Since $x$ is a small area inside the sample, when we choose $x$, it could include both collagen particles and metabolic activity particles. The aim is to separate the two classes of particles. In practice, the contributions of collagen particles to the intensity is much larger than the contributions of the metabolic activity. This allows us to understand that the reflectivity of collagen particles $K_c$ is much larger (realistic quantities are about $10^2$ to $10^4$ times) than the reflectivity of metabolic activity particles $K_m$, and
\begin{equation}\label{gamma_signal_compare}
|\Gamma^c_{ODT}(x, t)|\gg |\Gamma^m_{ODT}(x, t)|.
\end{equation}

In this section, we have given a multi-particle dynamical model, to separate the collagen signal and the metabolic activity signal. The next step is to analyze the properties of this model.

\section{Property analysis of the forward problem}

\subsection{Direct operator representation}

Based on the multi-particle dynamical model, in order to analyze the properties of collagen and metabolic activity, we first represent their corresponding operators.

Let $S$ be the integral operator with the kernel $\Gamma_{ODT}(x, t)$, which is a real-valued function given by (\ref{smallareamodel}). The collagen signal has high correlation between different points, while the metabolic signals have relatively lower correlation, so it would be useful to look at the correlation of the whole signal.
The correlation between two points $x$ and $y$ can be represented as $\int \Gamma_{ODT}(x, t)\overline{\Gamma_{ODT}(y, t)}\,dt$, which is exactly the integral kernel of the operator $SS^*$, where $S^*$ is the adjoint operator of $S$. We denote the kernel of $SS^*$ by
\begin{equation}\label{kernelForig}
F(x, y)= \int_{0}^{T}\Gamma_{ODT}(x, t)\overline{\Gamma_{ODT}(y, t)}dt,
\end{equation}
for some fixed $T>0$. Substituting the representation of $\Gamma_{ODT}(x, t)$ in (\ref{gammasplit}) into (\ref{kernelForig}), we arrive to
\begin{align*}
F(x, y)=F_{cc}(x, y)+F_{cm}(x, y)+F_{mc}(x, y)+F_{mm}(x, y),
\end{align*}
where for $j, k \in \{ c, m\} $, $F_{jk}(x, y)$ is given by
\begin{align}\label{Fij}
\begin{split}
F_{jk}(x, y)&=\int_{\mathbb{R}^{2}\times[-L,L]^2\times[0, T]}S_{0}(\omega_1)S_{0}(\omega_2)K_j(x, \omega_1)K_k(y, \omega_2)p_j(x, z_1, t)\\
&\times p_k(y, z_2, t)e^{\frac{4\pi i\bar{n}}{c}(\omega_1 z_1-\omega_2 z_2)}d\omega_1 d\omega_2 dz_1dz_2dt,
\end{split}
\end{align}
with $z_1, z_2\in [-L,L]$ and $\omega_1, \omega_2 \in \mathbb{R}, t \in [0,T]$.
Likewise, we denote the corresponding operator by $S_jS^*_{k}$ for $j,k \in \{ c,m \}$.
In the case of the collagen signal, note that the operator $S_cS_c^*$ contains the solely collagen information.

 First we consider its kernel $F_{cc}$. Applying the uniform movements of collagen particles (\ref{collagen-move}) along the $z$-direction yields
\begin{align}\label{kernelFdmm0000}
\begin{split}
F_{cc}(x, y)&=\int_{\mathbb{R}^{2}\times[-L,L]^2\times[0, T]}S_{0}(\omega_1)S_{0}(\omega_2)K_c(x, \omega_1)K_c(y, \omega_2)q_c(x,z_1-v_0t)\\
            &\times q_c(y,z_2-v_0t)e^{\frac{4\pi i\bar{n}}{c}(\omega_1 z_1-\omega_2 z_2)}d\omega_1 d\omega_2 dz_1dz_2dt.
            \end{split}
\end{align}

In order to simplify this expression even further, let us introduce a couple of assumptions.

Physically, since the scale of collagen and inter-cellular structures (such as collagen) are much larger than the coherence length $L$, the particle distribution inside a small slice $|z|<L$ should be more or less uniform. Therefore, it is reasonable to assume that $q_c(x, z)$ does not actually depend on $z$ inside such a small slice, namely, $q_c(x, z)=q_c(x)$.

Furthermore, in practice the tissue being imaged is nearly homogeneous, and therefore the reflectivity spectrum, (or more intuitively, the "color" of the tissue) should stay the same everywhere. The only difference in reflectivity between two points should be a difference of total reflectivity (using our "color" analogy, the two points would look like, e.g. "different shades of red", and not "red and yellow"). Therefore, for any two pixels $x_1$ and $x_2$, by looking at the reflectivities $K_c(x_1, \omega)$ and $K_c(x_2, \omega)$ as functions of frequency $\omega$, they are directly proportional. Thus it is reasonable to assume that $K_c(x, \omega)$ could be written in the variable separation form $K_{c_1}(x)K_{c_2}(\omega)$.

Under these two assumptions, the expression of $F_{cc}(x,y)$ can be simplified considerably:
\begin{equation}\label{kernelFdmm}
\begin{split}
F_{cc}(x, y)&=TK_{c_1}(x)K_{c_1}(y)q_c(x)q_c(y)\\
            &\times \int_{\mathbb{R}^{2}\times[-L,L]^2}S_{0}(\omega_1)S_{0}(\omega_2)K_{c_2}(\omega_1)K_{c_2}(\omega_2)e^{\frac{4\pi i\bar{n}}{c}(\omega_1 z_1-\omega_2 z_2)}d\omega_1 d\omega_2 dz_1dz_2\\
            &=TK_{c_1}(x)K_{c_1}(y)q_c(x)q_c(y)\\
            &\times\int_{[-L,L]^2}\mathcal{F}(S_{0}K_{c_2})(-\frac{4\pi \bar{n}z_1}{c})\mathcal{F}(S_{0}K_{c_2})(\frac{4\pi \bar{n}z_2}{c})dz_1dz_2\\
\end{split}
\end{equation}
where the Fourier transform of a function $f(\omega)$ is defined as $\mathcal{F}f(\tau)=\int_{\mathbb{R}}f(\omega)e^{-i\omega\tau}d\omega$.

This is the fundamental formula for analyzing collagen signal, since from this formula, we could see that $F_{cc}(x, y)$ is variable separable with respect to $x$ and $y$. This property gives us a hint to compute the eigenvalues of the collagen signal.

For the correlation terms $F_{cm}(x, y)$ and $F_{mc}(x, y)$, which contains both the collagen  and metabolic activity signals, we use again the uniform movement assumption for $p_c$ while keeping the metabolic part $p_m$. Inserting (\ref{collagen-move}) into (\ref{Fij}), we have

\begin{align}\label{kernelFdmc}
\begin{split}
F_{mc}(x, y)&=K_{c_1}(y)q_c(y)\int_{[-L,L]^2\times[0, T]}\mathcal{F}(S_{0}K_{c_2})(\frac{4\pi \bar{n}z_2}{c})\\
&\qquad \times \mathcal{F}(S_{0}K_{m})(x,-\frac{4\pi \bar{n}z_1}{c})p_m(x, z_1, t)dz_1dz_2 dt,
\end{split}
\end{align}
and
\begin{align}\label{kernelFdcm}
\begin{split}
F_{cm}(x, y)&=K_{c_1}(x)q_c(x)\int_{[-L,L]^2\times[0, T]}\mathcal{F}(S_{0}K_{c_2})(-\frac{4\pi \bar{n}z_1}{c})\\
&\qquad \times \mathcal{F}(S_{0}K_{m})(y,\frac{4\pi \bar{n}z_2}{c})p_m(y, z_2, t)dz_1dz_2 dt.
\end{split}
\end{align}
From representations (\ref{kernelFdmc}) and (\ref{kernelFdcm}), we could see that $F_{mc}(x, y)$ and $F_{cm}(x, y)$ have also variable separated forms with respect to $x$ and $y$.

In the case of the metabolic activity kernel $F_{mm}(x,y)$, by keeping the representation $p_m$, it is clear that
\begin{align}\label{kernelFdcc}
\begin{split}
F_{mm}(x,y)&=\int_{[-L,L]^2\times[0, T]}\mathcal{F}(S_{0}K_{m})(x,-\frac{4\pi \bar{n}z_1}{c})\mathcal{F}(S_{0}K_{m})(y,\frac{4\pi \bar{n}z_2}{c})\\
&\times p_m(x, z_1 ,t)p_m(y, z_2, t)dz_1dz_2 dt.
\end{split}
\end{align}

To sum up, the main feature of our multi-particle dynamical model is that, except the sole metabolic activity signal, all the other parts have kernels of variable separable form. Therefore, it is important to relate this property to the separation of the signals. This will be the aim of the next subsection.

\subsection{Eigenvalue analysis}

We have given the representation of the integral operators and their corresponding kernels. In order to argue for the feasibility of using a SVD, we will calculate the corresponding eigenvalues, showing that the collagen signal has one very large eigenvalue relative to the metabolic activity. We assume that the eigenvalues are ordered decreasingly, so $\lambda_1$ is the largest one.

We first recall that for an operator $A$ with rank one, the unique non-zero eigenvalue $\lambda$ is equal to the trace of $A$. From the expression of $F_{cc}(x, y)$, we could see that $F_{cc}(x, y)$ has rank one because of the separable form with respect to $x$ and $y$, so the operator $S_cS^*_{c}$ has only one nonzero eigenvalue, which we denote by $\lambda(S_cS^*_{c})$. Now we compare $\lambda(S_cS^*_{c})$ and the eigenvalues of the operator $S_mS^*_{m}$.

%

\begin{lemma}\label{compare_mm_cc}
Let $S_cS^*_{c}$ and $S_mS^*_{m}$ be the integral operators with kernels $F_{cc}$ and $F_{mm}$ defined in (\ref{kernelFdmm}) and (\ref{kernelFdcc}), respectively. If the intensities of collagen and metabolic activity satisfy (\ref{gamma_signal_compare}), then we have
\begin{equation*}
\lambda(S_cS^*_{c}) \gg \lambda_i(S_mS^*_{m}),\,\,\, \forall i\geq 1.
\end{equation*}
\end{lemma}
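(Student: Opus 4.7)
The plan is to reduce everything to a comparison of traces, exploiting the rank-one structure of $S_cS_c^*$ that was already established from the separable form of $F_{cc}$.

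First I would observe that since $F_{cc}(x,y)$ factors as a product of a function of $x$ and a function of $y$, the operator $S_cS_c^*$ has rank one, and therefore its unique nonzero eigenvalue coincides with its trace:
\begin{equation*}
\lambda(S_cS_c^*)=\operatorname{tr}(S_cS_c^*)=\int F_{cc}(x,x)\,dx.
\end{equation*}
Going back to the definition (\ref{kernelForig}) applied to the collagen part of the signal (rather than to the simplified form (\ref{kernelFdmm})), this integrand equals $\int_0^T |\Gamma^c_{ODT}(x,t)|^2\,dt$, so
\begin{equation*}
\lambda(S_cS_c^*)=\int\!\!\int_0^T |\Gamma^c_{ODT}(x,t)|^2\,dt\,dx.
\end{equation*}

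Next I would use the fact that $S_mS_m^*$ is a positive semi-definite operator, so all its eigenvalues are nonnegative. This immediately gives, for every $i\geq 1$,
\begin{equation*}
\lambda_i(S_mS_m^*)\leq \sum_{j\geq 1}\lambda_j(S_mS_m^*)=\operatorname{tr}(S_mS_m^*)=\int\!\!\int_0^T |\Gamma^m_{ODT}(x,t)|^2\,dt\,dx,
\end{equation*}
where the last equality again follows from (\ref{kernelForig}) applied to $\Gamma^m_{ODT}$.

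Combining the two expressions, the comparison between $\lambda(S_cS_c^*)$ and $\lambda_i(S_mS_m^*)$ reduces to comparing the $L^2$ norms in $(x,t)$ of $\Gamma^c_{ODT}$ and $\Gamma^m_{ODT}$. By the pointwise hypothesis (\ref{gamma_signal_compare}), $|\Gamma^c_{ODT}(x,t)|^2\gg |\Gamma^m_{ODT}(x,t)|^2$, so integrating over $x$ and $t\in[0,T]$ preserves the inequality and yields the claim.

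I do not anticipate a serious obstacle: the main ingredients (rank-one implies trace equals the nonzero eigenvalue, and for positive operators every eigenvalue is dominated by the trace) are standard, and the only delicate point is remembering to evaluate the traces using the original definition (\ref{kernelForig}) rather than the simplified form (\ref{kernelFdmm}), so that the pointwise hypothesis on the signals can be applied directly. One may also wish to briefly note that the "$\gg$" in (\ref{gamma_signal_compare}) is understood pointwise in $(x,t)$ (or at least on a set of full measure), which is the only mild regularity assumption needed to transfer the inequality to the traces.
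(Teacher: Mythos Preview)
Your proposal is correct and follows essentially the same route as the paper: identify $\lambda(S_cS_c^*)$ with $\tr(S_cS_c^*)$ via the rank-one property, bound each $\lambda_i(S_mS_m^*)$ by $\tr(S_mS_m^*)$ via positivity, express both traces as $L^2$ norms of $\Gamma^c_{ODT}$ and $\Gamma^m_{ODT}$ using (\ref{kernelForig}), and then invoke (\ref{gamma_signal_compare}). Your remark about evaluating the traces from the original kernel definition rather than the simplified form (\ref{kernelFdmm}) is exactly the point the paper relies on as well.
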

\begin{proof}

On one hand, $F_{cc}(x, y)$ has rank one, so it is clear that
\begin{equation}\label{trmm}
\lambda(S_cS^*_{c})=\tr(S_cS^*_{c}).
\end{equation}

On the other hand, since the eigenvalues of operator $S_mS^*_{m}$ are all positive, any eigenvalue $\lambda_i(S_mS^*_{m})$ satisfies
\begin{equation}\label{trcc}
\lambda_i(S_mS^*_{m})<\Sigma_{i=1}^{\infty}\lambda_i(S_mS^*_{m})=\tr(S_mS^*_{m}).
\end{equation}

Then it suffices to prove that $\tr(S_cS^*_{c})\gg \tr(S_mS^*_{m})$.
From the definition of trace of an operator, we readily get $\tr(S_cS^*_{c})=\int_{x\in \Omega}F_{cc}(x, x)dx$. Substituting the expression (\ref{kernelForig}) into the above formula yields
\begin{align*}
\begin{split}
\tr(S_cS^*_{c})&=\int_{x\in \Omega}\frac{1}{2}\int_{-\infty}^{\infty}\Gamma^c_{ODT}(x, t)\overline{\Gamma^c_{ODT}(x, t)}dtdx\\
&=\frac{1}{2}\int_{x\in \Omega}\int_{-\infty}^{\infty}|\Gamma^c_{ODT}(x, t)|^2dtdx.
\end{split}
\end{align*}

The same analysis can be carried out by looking at $\tr(S_mS^*_{m})$,
\begin{equation*}
\tr(S_mS^*_{m})=\frac{1}{2}\int_{x\in \Omega}\int_{-\infty}^{\infty}|\Gamma^m_{ODT}(x, t)|^2dtdx.
\end{equation*}

Recall that the intensity of collagen signal is much larger than metabolic activity signal, which is the assumption in (\ref{gamma_signal_compare}). Hence, we obtain the trace comparison $\tr(S_cS^*_{c})\gg \tr(S_mS^*_{m})$.

\end{proof}

Now we compare the eigenvalue $\lambda(S_cS^*_{m})$ with $\lambda(S_cS^*_{c})$ and $\lambda_1(S_mS^*_{m})$.

\begin{lemma}\label{compare_mc_cc_mm}
Let $S_cS^*_{c}$, $S_cS^*_{m}$ and $S_mS^*_{m}$ be the integral operators with kernels defined in (\ref{kernelFdmm}), (\ref{kernelFdmc}) and (\ref{kernelFdcc}). Then their eigenvalues satisfy
\begin{equation*}
\lambda_i(S_cS^*_{m})\leq \sqrt{\lambda(S_cS^*_{c})\lambda_1(S_mS^*_{m})}
\end{equation*}
for all $i$.
\end{lemma}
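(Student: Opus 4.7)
The plan is to exploit the fact, already used in the proof of Lemma \ref{compare_mm_cc}, that $F_{cc}$ is a rank-one kernel. This forces the operator $S_c$ itself to have rank one, so its (unique) nonzero singular value equals $\sqrt{\lambda(S_cS_c^*)}$, that is, $\|S_c\|_{\mathrm{op}} = \sqrt{\lambda(S_cS_c^*)}$. Similarly, since $S_mS_m^*$ is self-adjoint and positive, $\|S_m\|_{\mathrm{op}} = \|S_m^*\|_{\mathrm{op}} = \sqrt{\lambda_1(S_mS_m^*)}$.

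Next I would invoke submultiplicativity of the operator norm applied to the composition $S_cS_m^*$:
\begin{equation*}
\|S_cS_m^*\|_{\mathrm{op}} \;\leq\; \|S_c\|_{\mathrm{op}}\,\|S_m^*\|_{\mathrm{op}} \;=\; \sqrt{\lambda(S_cS_c^*)\,\lambda_1(S_mS_m^*)}.
\end{equation*}
Since $\lambda_i(S_cS_m^*)$ in the statement refers to the singular values of the (non-self-adjoint) operator $S_cS_m^*$ --- i.e. the square roots of the eigenvalues of $(S_cS_m^*)(S_cS_m^*)^*$, which is the natural quantity entering the SVD that the paper builds on --- and since every singular value is dominated by the operator norm, we conclude $\lambda_i(S_cS_m^*) \leq \|S_cS_m^*\|_{\mathrm{op}}$ for every $i$, yielding the claimed bound.

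As a cross-check, one can obtain the same conclusion directly from the kernel formula \eqref{kernelFdcm}: since $F_{cm}(x,y) = K_{c_1}(x) q_c(x)\,H(y)$ is a tensor product, $S_cS_m^*$ is explicitly rank one, so the only nontrivial singular value coincides with its Hilbert--Schmidt norm, and a Cauchy--Schwarz estimate on the resulting double integral recovers the product $\sqrt{\lambda(S_cS_c^*)\,\lambda_1(S_mS_m^*)}$. This would give an ``elementary'' proof free of any operator-theoretic black box, at the cost of slightly heavier bookkeeping with the Fourier factors $\mathcal{F}(S_0K_{c_2})$ and $\mathcal{F}(S_0K_m)$.

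The main obstacle is really notational: pinning down precisely what ``$\lambda_i(S_cS_m^*)$'' means for a non-self-adjoint operator, and then matching that to the objects $\lambda(S_cS_c^*)$, $\lambda_1(S_mS_m^*)$ on the right-hand side. Once ``eigenvalue'' is read as ``singular value'' (which is the only reading consistent with the SVD framework introduced in Section 4), the inequality is a one-line consequence of submultiplicativity combined with the rank-one structure of $S_c$.
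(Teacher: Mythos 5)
Your proposal is correct and follows essentially the same route as the paper: bound $\lambda_i(S_cS_m^*)$ by the operator norm, use submultiplicativity $\|S_cS_m^*\|_{\mathrm{op}}\leq\|S_c\|_{\mathrm{op}}\|S_m^*\|_{\mathrm{op}}$, and identify these norms with the largest singular values $\sqrt{\lambda(S_cS_c^*)}$ and $\sqrt{\lambda_1(S_mS_m^*)}$. The rank-one cross-check via the kernel of $F_{cm}$ is a nice extra but not needed; the core argument coincides with the paper's.
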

\begin{proof}
Recall the definition of the operator norm of an operator $A$, namely, $\|A\|_{OP}=\sup\{\frac{\|Av\|}{\|v\|}, v\in V \,\text{with}\, v\neq 0\}$, which yields $\lambda(S_cS^*_{m})\leqslant \|S_cS^*_{m}\|_{OP}$. Since the operator norm is equal to the largest singular value, direct calculation shows that
\begin{equation*}
\begin{split}
\|S_cS_m^*\|_{OP}&\leq\|S_c\|_{OP}\|S_m^*\|_{OP}\\
&=\sigma_1(S_c)\sigma_1(S_m^*)\\
&=\sqrt{\lambda(S_cS_c^*)}\sqrt{\lambda_1(S_mS_m^*)} ,
\end{split}
\end{equation*}
where $\sigma_1$ denotes the largest singular value.
\end{proof}


In this section, we discussed eigenvalue analysis for the forward operator of multi-particle dynamical model. More explicitly, we showed that the largest eigenvalue corresponds to the collagen signal, the middle eigenvalues mix the collagen signal and metabolic activity signal, and the remaining eigenvalue corresponds to the metabolic activity signal. Also in our model the solely collagen signal has rank one, which provides a good reason to use SVD in solving the inverse problem.


\section{The inverse problem: Signal separation}

Our main purpose in this paper is to image the dynamics of metabolic activity of cells. Highly backscattering structures like collagen dominate the dynamic OCT signal, masking low-backscattering structures such as metabolic activity. As shown in the modeling part, we divide the scattering particles in the tissue into the high-backscattering collagen part, and the low-backscattering metabolic activity part. Based on this division, the resulting image $\Gamma_{ODT}(x, t)$ could also be written as the sum of the collagen $\Gamma^c_{ODT}(x, t)$ and the metabolic activity part $\Gamma^m_{ODT}(x, t)$. The inverse problem is to recover the intensity of metabolic activity of cells from the image $\Gamma_{ODT}(x, t)$. In this paper, we use singular value decomposition (SVD) method to approximate the metabolic activity part, then using a particular formula (see (\ref{meq-intensity})) to get its corresponding intensity.

\subsection{Analysis of SVD algorithm}

Since we have proved the high backscattering signal corresponds to a rank one kernel and this part is far larger than the rest, It is natural to associate it to the first singular value of the SVD expansion. We claim that in order to remove the high backscattering signal, it is reasonable to remove the first term in the SVD expansion of the image. In this section we first recall the SVD algorithm, and then we assert that there is a gap between our model and SVD algorithm. At the end of this section, we give a result to illustrate the gap is small that we could ignore.

Let $x_1,x_2,\dots, x_j, \dots$ denote the pixels of the image. We define the matrices $A,A_c\in \mathbb{C}^{n_x\times n_t}$ by
\begin{equation*}\label{AAc}
\begin{split}
A_{j, k}&=\Gamma_{ODT}(x_j, t_k)\\
(A_c)_{j, k}&=\Gamma_{ODT}^c(x_j, t_k),
\end{split}
\end{equation*}
where $j\in\{1,...,n_x\}, \, k\in\{1,...,n_t\}$.

Recall that a non-negative real number $\sigma$ is a singular value for a matrix $A$, if and only if there exists unit vectors $u\in \mathbb{R}^{n_x}$ and $v\in\mathbb{R}^{n_t}$ such that
\begin{equation*}
Av=\sigma u   \,\,\,\text{and} \,\,\, A^*u=\sigma v,
\end{equation*}
where the vectors $u$ and $v$ are called left-singular and right-singular vectors of $A$ for the singular value $\sigma$.

Assuming that the singular values of $A$ are ordered decreasingly, that is, $\sigma_1\geq \sigma_2\geq\dots$, and let $u_i$ and $v_i$ be the singular vectors for $\sigma_i$. We emphasis that the vectors $u_i$ and $v_i$ are orthonormal sets in $\mathbb{C}^{n_x}$ and $\mathbb{C}^{n_t}$ respectively. Thus, the SVD of the matrix $A$ is given by
\begin{equation}\label{svd}
A=\Sigma_{i=1}^{n_t}\sigma_i u_i\overline{v_i}^T.
\end{equation}

Since the matrix $A$ is composed of a large rank one part $A_c$ and a small part coming from metabolic signal $\Gamma_{ODT}^m$, we can say that $A-A_c$ is "relatively small" with respect to $A$. It is well known that the first term in the SVD expansion of $A$ is the rank one matrix $A_1$ such that $\|A-A_1\|_{op}$ is minimal. Therefore, it is natural to think that $A_c$ is "close" to $A_1$ in some way. But $A_1=\sigma_1 u_1\overline{v_1}^T$ is generally not the same as $A_c$, because as we will see in Appendix \ref{App:AppendixA}, the eigenvectors of the kernels $F_{cc}$, $F_{cm}$ and $F_{mc}$ are generally not orthogonal. Since SVD always gives an orthogonal set of eigenvectors, we conclude that the SVD approach itself does not give the eigenvectors exactly. Nevertheless, we can show that the SVD result is still a good approximation to the true eigenvectors.

\begin{table}
  \centering
  \begin{tabular}{p{3.5cm}|p{2.5cm}|p{2.5cm}|p{2.5cm}}
  \hline
  \hline
  {} & Total signal $\Gamma_{ODT}$ & First term in the SVD of $\Gamma_{ODT}$ & collagen signal $\Gamma_{ODT}^c$\\
  \hline
  Matrix & $A$ & $A_1$ & $A_c$ \\
  \hline
  First singular value & $\sigma_1$ & $\sigma_1$ & $\sigma_c$ \\
  \hline
  First singular vector & $u_1,v_1$ & $u_1,v_1$ & $u_c,v_c$ \\
  \hline
  Other singular values & $\sigma_2>\sigma_3>\dots>\sigma_i$ & $0$ & $0$ \\
  \hline
  \hline
\end{tabular}
  \caption{Singular values and singular vectors.}\label{tableSymbols}
\end{table}

To bridge the gap between the collagen signal $\Gamma_{ODT}^c$ and the first term of SVD expansion of $\Gamma_{ODT}$, we investigate the relationship between their singular values and singular vectors. We note that $\Gamma_{ODT}^c$ has only one nonzero singular value $\sigma_c$, with the corresponding singular vectors $u_c$ and $v_c$.

We claim in the following theorem that the singular value $\sigma_1$ and the corresponding singular vector $u_1$ are good approximations of the singular value $\sigma_c$ and singular vector $u_c$. See Table \ref{tableSymbols} for the notations of their singular values and singular vectors.

\begin{theorem}\label{eigenvector-difference}
Let $\sigma_i$, $u_i$, $v_i$, $A_c$, $u_c$ and $v_c$ be described in Table \ref{tableSymbols}. Assume that the collagen signal dominates, that is,
\begin{equation}\label{def-N}
\frac{\|A-A_c\|_{op}}{\|A_c\|_{op}}=1/N
\end{equation}
for a large $N$. Then there exists a constant $C>0$ such that
\begin{equation*}
\frac{|\sigma_c-\sigma_1|}{\sigma_c}\leq C/N,
\end{equation*}
and
\begin{equation*}
\|u_c-u_1\|_{l^2}\leq C/N.
\end{equation*}

\end{theorem}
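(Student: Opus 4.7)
The plan is to treat the statement as a standard matrix perturbation result. Write $A = A_c + E$ with $E := A - A_c$, so that the hypothesis becomes $\|E\|_{op} = \sigma_c/N$, where $\sigma_c = \|A_c\|_{op}$ is the unique nonzero singular value of the rank-one matrix $A_c$. The first inequality will follow from Weyl's inequality for singular values of a perturbed matrix, and the second from Wedin's $\sin\theta$ theorem (equivalently Davis--Kahan) applied to the leading left singular subspace. The rank-one structure of $A_c$, established in the previous section, is what makes Wedin's spectral-gap hypothesis trivial to verify.

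The bound on singular values is essentially immediate. Weyl's inequality gives $|\sigma_i(A) - \sigma_i(A_c)| \le \|E\|_{op}$ for every $i$. Taking $i = 1$ and using $\sigma_1(A_c) = \sigma_c$, we obtain $|\sigma_1 - \sigma_c| \le \sigma_c/N$, which is the first claim with $C = 1$.

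For the singular vector bound I would first convert the rank-one structure of $A_c$ into a quantitative spectral gap for $A$. Since $\sigma_2(A_c) = 0$, Weyl yields $\sigma_2(A) \le \|E\|_{op} = \sigma_c/N$, while $\sigma_1(A) \ge \sigma_c - \sigma_c/N$, so for $N \ge 4$ the gap satisfies $\sigma_1(A) - \sigma_2(A) \ge \sigma_c(1 - 2/N) \ge \sigma_c/2$. Wedin's theorem then compares the top left singular subspaces of $A$ and $A_c$ and gives
$$\sin\theta \;\le\; \frac{\|E\|_{op}}{\sigma_1(A) - \sigma_2(A)} \;\le\; \frac{\sigma_c/N}{\sigma_c/2} \;=\; \frac{2}{N},$$
where $\theta$ is the canonical angle between $\operatorname{span}\{u_c\}$ and $\operatorname{span}\{u_1\}$. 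Fixing the sign of $u_1$ so that $\langle u_c, u_1\rangle \ge 0$ and using $\|u_c - u_1\|_{l^2}^2 = 2(1-\cos\theta) = 4\sin^2(\theta/2) \le \theta^2$ together with the elementary bound $\theta \le (\pi/2)\sin\theta$ on $[0, \pi/2]$, we conclude $\|u_c - u_1\|_{l^2} \le \pi/N$, giving the second claim with $C = \pi$.

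The only subtle point, and thus the main obstacle, is the non-uniqueness of the singular vector $u_1$: without a spectral gap between $\sigma_1$ and $\sigma_2$ the top singular subspace is not one-dimensional and $u_1$ is not well-defined as a vector. The rank-one structure of $A_c$ makes this harmless, because the perturbation $\|E\|_{op} = \sigma_c/N$ is well below the intrinsic gap $\sigma_c$ of $A_c$; the remaining sign ambiguity is resolved by the positive-inner-product convention. No finer structure of the densities $p_c$, $p_m$ or the reflectivities enters the argument, and the constant $C$ in the theorem simply absorbs the numerical factors from the gap estimate and the sine-to-chord conversion.
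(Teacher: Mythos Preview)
Your argument is correct, and it takes a genuinely different route from the paper's own proof.

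The paper parametrises the perturbation analytically: it sets $F(s)=(A_c+sN(A-A_c))^*(A_c+sN(A-A_c))$, invokes Rellich's theorem to obtain real-analytic eigenvalue and eigenvector curves $\lambda_1(s)$, $u_1(s)$ with $F(0)=A_c^*A_c$ and $F(1/N)=A^*A$, and then Taylor-expands at $s=0$. The first-order derivatives $\lambda_1'(0)=\langle u_1(0),F'(0)u_1(0)\rangle$ and $u_1'(0)$ are bounded by hand; the rank-one hypothesis enters through the identity $F(0)u_1'(0)=0$, which is what makes $\|u_1'(0)\|$ bounded independently of $N$. The final estimates carry $O(1/N^{3/2})$ and $O(1/N^2)$ Taylor remainders that are absorbed into the constant $C$.

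You instead appeal directly to the non-asymptotic perturbation inequalities of Weyl and Wedin (Davis--Kahan for singular subspaces). The rank-one hypothesis enters in a different but equally essential way: it forces $\sigma_2(A_c)=0$, which produces the spectral gap $\sigma_1(A)-\sigma_2(A)\ge\sigma_c/2$ needed for Wedin. Your approach is shorter, gives explicit constants without remainder terms, and is the standard argument one would find in the numerical linear algebra literature; the paper's approach is more self-contained (no black-box $\sin\theta$ theorem) and makes the first-order mechanism visible. Two minor remarks: the precise gap appearing in Wedin's bound is usually $\sigma_1(A_c)-\sigma_2(A)$ or $\sigma_1(A)-\sigma_2(A_c)$ rather than $\sigma_1(A)-\sigma_2(A)$, but with $\sigma_2(A_c)=0$ all variants give the same $O(1/N)$ conclusion; and the sine-to-chord step can be shortened to $\|u_c-u_1\|_{l^2}\le\sqrt{2}\,\sin\theta$, avoiding the detour through Jordan's inequality.
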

\begin{proof}

We define a matrix-valued function
\begin{equation}\label{F-function}
F: s\mapsto (A_c+sN(A-A_c))^*(A_c+sN(A-A_c)).
\end{equation}

Through this construct of $F$, we obtain
\begin{equation*}
F(0)=A_c^*A_c \,\,\textrm{and }\,\, F(\frac{1}{N})=A^*A.
\end{equation*}

Applying Rellich's perturbation theorem on hermitian matrices $F$ (see, for example, \cite{rellich1969perturbation}) to get the following two properties. There exists a set of $n$ analytic functions $\lambda_1(s),\lambda_2(s),\dots$, such that they are all the eigenvalues of $F(s)$. Also, there exists a set of vector-valued analytic functions $u_1(s),u_2(s),\dots$, such that $F(s)u_i(s)=\lambda_i(s)u_i(s)$, and $\langle u_i(s),u_j(s)\rangle=\delta_{ij}$.

In view of the definition of $u_i(s)$ and $\lambda_i(s)$, we show four useful properties,
\begin{align}\label{relation-u-sigma}
\begin{aligned}
u_1(0)&=u_c,& u_1(1/N)&=u_1,\\
\lambda_1(0)&=\sigma_c^2=\|A_c\|_{op}^2,& \lambda_1(1/N)&=\sigma_1^2,
\end{aligned}
\end{align}
where the last property comes from the fact $\lambda_1(1/N)$ is the largest eigenvalue of $F(1/N)=A^*A$ when $N\gg 1$.

The objective is to get upper bounds for $\|u_c-u_1\|_{l^2}$ and $|\sigma_c-\sigma_1|$. Using (\ref{relation-u-sigma}), we have $u_c-u_1=u_1(0)-u_1(1/N)$ and $\sigma_c-\sigma_1=\sqrt{\lambda_1(0)}-\sqrt{\lambda_1(1/N)}$. Since $u_1(s)$ and $\lambda_1(s)$ are analytic, a Taylor expansion at $0$ yields
\begin{equation}\label{estimate_sigma_u}
\begin{split}
\|u_c-u_1\|_{l^2}&=\|\frac{u_1'(0)}{N}\|_{l^2}+O(1/N^{3/2}), \\
|\sigma_c-\sigma_1|&=\frac{\lambda_1'(0)}{2\sqrt{\lambda_1(0)}N}+O(1/N^2).
\end{split}
\end{equation}

The next step is to seek for proper upper bounds for $\lambda_1'(0)$ and $u_1'(0)$.

For the upper bound of $\lambda_1'(0)$, we differentiate $F(s)u_i(s)=\lambda_i(s)u_i(s)$ with respect to $s$ and then take $s=0$ to obtain
\begin{equation}\label{Fu}
F'(0)u_i(0)+F(0)u_i'(0)=\lambda_i(0)u_i'(0)+\lambda_i'(0)u_i(0).
\end{equation}
Since we always have $\|u_i(s)\|_{\ell^2}=1$, a direct calculation shows that $$\langle u_i(s),u_i'(s)\rangle=\frac12\frac{d}{ds}\|u_i(s)\|^2=0.$$ By taking an inner product of both sides of (\ref{Fu}) with $u_i(0)$, we get
\begin{align*}\begin{split}
\lambda_i'(0)&=\lambda_i'(0)\|u_i(0)\|^2_{l^2}\\
&=\langle u_i(0),F'(0)u_i(0)\rangle+\langle u_i(0),F(0)u_i'(0)\rangle\\
&=\langle u_i(0),F'(0)u_i(0)\rangle+\langle F(0)u_i(0),u_i'(0)\rangle\\
&=\langle u_i(0),F'(0)u_i(0)\rangle+\lambda_i(0)\langle u_i(0),u_i'(0)\rangle\\
&=\langle u_i(0),F'(0)u_i(0)\rangle.
\end{split}\end{align*}

Hence, $\lambda_i'(0)$ satisfies $|\lambda_i'(0)|\leq \|F'(0)\|_{op}$. By the definition of $F(s)$, we have $\|F'(0)\|_{op}=N\|A_c^*(A-A_c)+(A-A_c)^*A_c\|\leq 2N\|A_c\|_{op}\|A-A_c\|_{op}$. Replacing $N$ with (\ref{def-N}) yields $|\lambda_i'(0)|\leq 2\|A_c\|^2_{op}$. Therefore, by inserting the expression $\lambda_1(0)$ in (\ref{relation-u-sigma}) into (\ref{estimate_sigma_u}), we get $|\sigma_c-\sigma_1|\leq \frac{\sigma_c}{N}+O(1/N^2)$.

For the upper bound of $u_1'(0)$, we look again at (\ref{Fu}). By taking an inner product with $u_1'(0)$, we immediately obtain
\begin{equation*}
\langle u_1'(0),F'(0)u_1(0)\rangle+\langle u_1'(0), F(0)u_1'(0)\rangle=\lambda_1(0)\|u_1'(0)\|^2_{l^2}.
\end{equation*}

Recall that the matrix $A_c$ is of rank one. So, there exists a positive constant $c$, such that $A_c^*A_c=cu_1(0)u_1^T(0)$, which reads
\begin{equation*}
F(0)u_1'(0)=cu_1(0)(u_1^T(0)u_1'(0))=cu_1(0)\langle u_1(0),u_1'(0)\rangle=0.
\end{equation*}

Therefore, direct calculation shows that $\|u_1'(0)\|_{l^2}\leq \frac{\|F'(0)u_1(0)\|_{l^2}}{\lambda_1(0)}\leq \frac{\|F'(0)\|_{op}}{\|A_c\|^2_{op}}\leq 2$.

The rest of the proof follows by substituting the above bound into (\ref{estimate_sigma_u}), then we have $\|u_c-u_1\|_{l^2}\leq\frac{2}{N}+O(1/N^{3/2})$.
\end{proof}

\textbf{Remark 1.} Theorem \ref{eigenvector-difference} shows that the eigenvector difference of two classes is the order of $\frac{1}{N}$, where $N$ could be seen as the ratio between collagen signal and metabolic signal, so when $N$ is large enough, the difference could be ignored, therefore, it is reasonable to use the eigenvectors of the SVD to approximate the true eigenvectors.

\textbf{Remark 2.} In the proof of Theorem \ref{eigenvector-difference}, we did not use any representation of $A$ and $A_c$, so in a more general case, for any matrix $A=A_c+o(A_c)$ where rank of $A_c$ is 1, the first singular value and first singular vector of $A$ could be used to approximate the singular value and the singular vector of $A_c$.

\subsection{Analysis of obtaining the intensity of metabolic activity}

Recall that our objective is to get the intensity of the metabolic activity after removing the influence of the collagen signal. We have proved that the largest singular value corresponds to the collagen signal, and the following few singular values correspond to the correlation part between collagen signal and metabolic activity, the rest of the singular values contains information related to the metabolic activity.

Let $T$ be the set of these "rest" singular values. In practice, we only know the total signal $\Gamma_{ODT}(x, t)$ (or the matrix $A$). By performing a SVD for $\Gamma_{ODT}(x, t)$, we take the terms only corresponding to the singular values in $T$ in the SVD expansion. The next problem is to reconstruct the intensity of the particle movements of metabolic activity. In our numerical experiments, we observe that the sum $\sum_{i\in T} \sigma_{i}^2|u_{i}(x_j)|^2$ gives a very good approximation to the intensity of metabolic activity at the pixel $x_j$. We will explain why it works.

Physically, we expect the metabolic activity signal to be centered around 0, so in each pixel $x_j$, the norm $\|A_{m}(x_j,t)\|_{\ell^2}^2$ could be seen as the standard deviation of the metabolic signal, which could represent the intensity of metabolic activities in pixel $x_j$. However, in our model the eigenvectors are not orthogonal (this statement may be justified by arguing as in Appendix \ref{App:AppendixA}). Thus when using a SVD, we do not get the exact "pure" metabolic activity signal $A_m$, but only an approximation, which we denote by $A_{m_1}$. We first give an interpretation that $\sum_{i\in T} \sigma_{i}^2|u_{i}(x_j)|^2$ could be written as a $\ell^2$ norm of the matrix $A_{m_1}$.

\begin{theorem}\label{mth-intensity}
Let $A$ be the matrix after the discretization of $\Gamma_{ODT}(x, t)$ with respect to $x$ and $t$, such that the $j$-th row of $A$ corresponds to the pixel $x_j$, and the $k$-th column of $A$ corresponds to the time $t_k$. Let $T$ be a subset of singular values of $A$, and $A_{m_1}$ be the result of taking only the singular values in $T$ from $A$. Then for any pixel $x_j$, we have
\begin{equation}\label{meq-intensity}
\sum_{i\in T} \sigma_i^2|u_i(x_j)|^2=\sum_{k}|A_{m_1}(x_j,t_k)|^2.
\end{equation}

\end{theorem}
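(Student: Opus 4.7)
The plan is to verify the identity by direct computation, starting from the SVD definition of $A_{m_1}$ and using orthonormality of the right singular vectors $v_i$. Since the SVD of $A$ is $A=\sum_{i=1}^{n_t}\sigma_i u_i\overline{v_i}^T$ as in (\ref{svd}), and $A_{m_1}$ is by definition the partial sum obtained by keeping only the indices in $T$, I would first write
\begin{equation*}
A_{m_1}=\sum_{i\in T}\sigma_i u_i\overline{v_i}^T,
\end{equation*}
so that its $(j,k)$ entry reads
\begin{equation*}
A_{m_1}(x_j,t_k)=\sum_{i\in T}\sigma_i\, u_i(x_j)\,\overline{v_i(t_k)}.
\end{equation*}

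Next, I would expand the modulus squared and sum over $k$:
\begin{equation*}
\sum_k |A_{m_1}(x_j,t_k)|^2
=\sum_k\sum_{i,i'\in T}\sigma_i\sigma_{i'}\,u_i(x_j)\overline{u_{i'}(x_j)}\,\overline{v_i(t_k)}v_{i'}(t_k).
\end{equation*}
Interchanging the sum in $k$ with the finite sum in $i,i'$ gives a factor $\sum_k \overline{v_i(t_k)}v_{i'}(t_k)=\langle v_{i'},v_i\rangle_{\mathbb{C}^{n_t}}$.

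The key ingredient then is the orthonormality of the right singular vectors $v_i$ (already noted in the paper just before (\ref{svd})), which collapses $\langle v_{i'},v_i\rangle$ to $\delta_{ii'}$. After this collapse only the diagonal terms $i=i'$ survive and one obtains exactly $\sum_{i\in T}\sigma_i^2|u_i(x_j)|^2$, which proves (\ref{meq-intensity}).

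There is no genuine obstacle here: the statement is a clean computation that only uses the SVD definition of $A_{m_1}$ and the fact that the $v_i$'s form an orthonormal set in $\mathbb{C}^{n_t}$. The step requiring the most care is simply writing the double sum correctly with conjugates in the right places so that the orthonormality of $v_i$ (rather than something that would have required orthonormality of $u_i$, which is also available but not needed here) is what produces the Kronecker delta.
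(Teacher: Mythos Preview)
Your proof is correct and is essentially the same argument as the paper's, just written componentwise rather than in matrix form: the paper writes $A_{m_1}=US_TV^*$ and uses $V^*V=I$ to conclude $\|A_{m_1}(x_j,\cdot)\|_{\ell^2}^2=\|US_T(x_j,\cdot)\|_{\ell^2}^2$, which is exactly your use of $\langle v_{i'},v_i\rangle=\delta_{ii'}$ to collapse the double sum.
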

\begin{proof}

We apply the SVD algorithm to the matrix $A$ to get $A=USV^*$, where $U=(u_1,u_2,\dots)$, $V=(v_1,v_2,\dots)$ are unitary matrices, and $S$ is a diagonal matrix containing the singular values of $A$.

We construct a new diagonal matrix $S_T$, which is obtained from $S$ by keeping all the singular values in $T$, but changing everything else to zero. By the definition of $A_{m_1}$, we readily derive $A_{m_1}=US_TV^*$.

Note that $\sigma_i u_i(x_j)$ is the element at row $j$, column $i$ of the matrix $US$. By the construction of $S_T$, we know $\sigma_i u_i(x_j)$ is the element at row $j$, column $i$ of the matrix $US_T$ for every $i\in T$. Therefore, the sum $\sum_{i\in T} \sigma_i^2|u_i(x_j)|^2$ is equal to the square-sum of the $j$-th row of the matrix $US_T$, which gives
\begin{equation}\label{ussigma}
\|US_T(x_j,\cdot)\|_{\ell^2}^2=\sum_{i\in T}\sigma_i^2|u_i(x_j)|^2.
\end{equation}

On the other hand, the relation $A_{m_1}=(US_T)V^*$ means that for each $x_j$, $A_{m_1}(x_j,\cdot)=(US_T)(x_j,\cdot)V^*$.

A direct calculation from the definition of $\ell^2$ norm of vectors shows that
\begin{equation*}
\sum_{k}|A_{m_1}(x_j,t_k)|^2=\|A_{m_1}(x_j,\cdot)\|_{\ell^2}^2=A_{m_1}(x_j,\cdot)A_{m_1}(x_j,\cdot)^*.
\end{equation*}

Using $V^*V=I$ and substituting $(US_T)(x_j,\cdot)V^*$ for $A_{m_1}(x_j,\cdot)$ yields

\begin{equation}\label{sum_jforA}
\sum_{k}|A_{m_1}(x_j,t_k)|^2=US_T(x_j,\cdot)(US_T(x_j,\cdot))^*=\|US_T(x_j,\cdot)\|_{\ell^2}^2.
\end{equation}

Combining (\ref{ussigma}) and (\ref{sum_jforA}) completes the proof.
\end{proof}

Then let us look at the $\ell^2$ norm of the difference between the two matrices $A_m$ and $A_{m_1}$. Proceeding as in the proof of Theorem \ref{eigenvector-difference}, we can estimate $\|A_{m}-A_{m_1}\|$.  When $N$ in (\ref{def-N}) is large enough, it is reasonable to approximate $A_m$ by $A_{m_1}$. This fact enables us to say that $\|A_{m_1}(x_j,t)\|_{\ell^2}^2\approx \|A_{m}(x_j,t)\|_{\ell^2}^2$ for each pixel $x_j$.

Therefore, we conclude that $\sum_{i\in T} \sigma_i^2|(u_i)_j|^2$ over the set $T$ of "rest" singular values is indeed a good approximation for the metabolic activity intensity.

\section{Numerical experiments}

In this section we model the forward measurements of our problem. Using the SVD decomposition we filter out the signal, finally obtaining images of the hidden weak sources.

\subsection{ Forward problem measurements }

To simulate the signal measurements using formula \eqref{smallareamodel}, we only need to simulate the density function $p(x,z,t)$ of the media to be illuminated. For each pixel $x$, there are two types of superimposed media. One is the collagen media characterized for having a strong signal and slow movement. The second medium is the metabolic activity, that has a fast movement relative to the time samples. According to \cite{apelian2016dynamic}, the collagen signal intensity is around 100 times stronger than the metabolic one.

Given these properties, both media are modeled differently. The collagen particles are simulated as an extended random medium on $z$ that displaces slowly on time; see \cite{klimevs2002correlation}. For each pixel $x$, an independent one-dimensional random medium $r_x(\cdot)$ is generated, and then $p(x,z,t) = r_x(z + tv)$ with $v$ being the constant movement velocity. The metabolic activity is simulated as an uniform white noise, whose intensity represents its magnitude. Background or instrumental noise is added everywhere in a similar fashion, but with smaller intensity.

After the medium is simulated, formula \eqref{smallareamodel} is applied to reproduce the measured signal, where for integration purposes, the broadband light is approximated by Dirac deltas in certain frequencies. All the model parameters are set such that we obtain similar measurements to the ones obtained in \cite{apelian2016dynamic}. In Figure \ref{fig:signal} we can see, for a single pixel, the simulated signal as a function of time.

\begin{figure}[h!]
\centering
	\begin{minipage}[c]{.28\textwidth}
		\centering
		\ \ \  \scriptsize Total Signal
	\end{minipage}
	\hspace{.5cm}
	\begin{minipage}[c]{.28\textwidth}
		\centering
		\ \  \ \scriptsize Collagen  Signal
	\end{minipage}
	\hspace{.5cm}
	\begin{minipage}[c]{.28\textwidth}
		\centering
		\ \  \ \scriptsize Metabolic Signal
	\end{minipage}
	\\
	\begin{minipage}[c]{.28\textwidth}
		\centering
		\includegraphics[width=\linewidth]{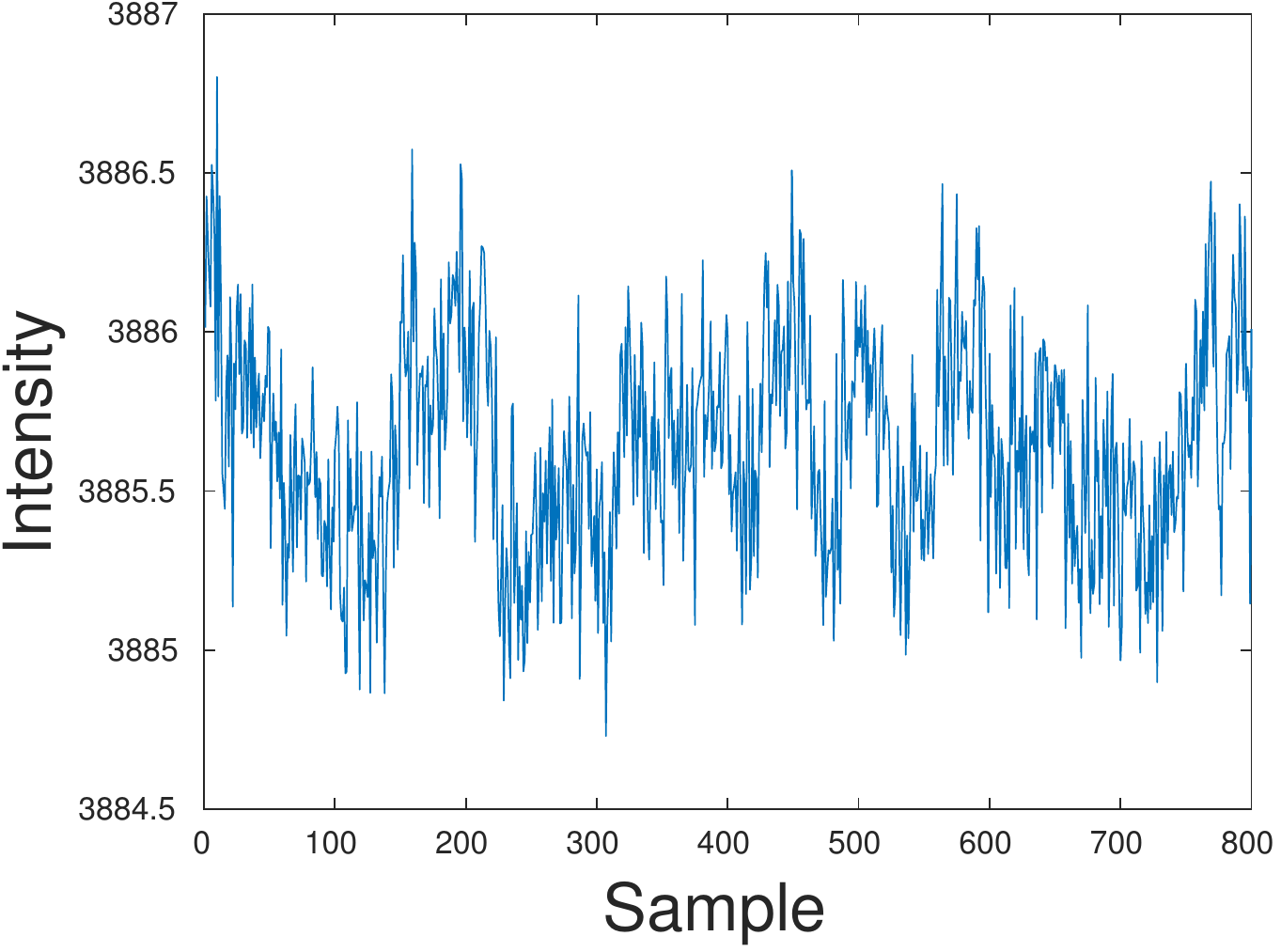}
	\end{minipage}
	\hspace{.5cm}
	\begin{minipage}[c]{.28\textwidth}
		\centering
		\includegraphics[width=\linewidth]{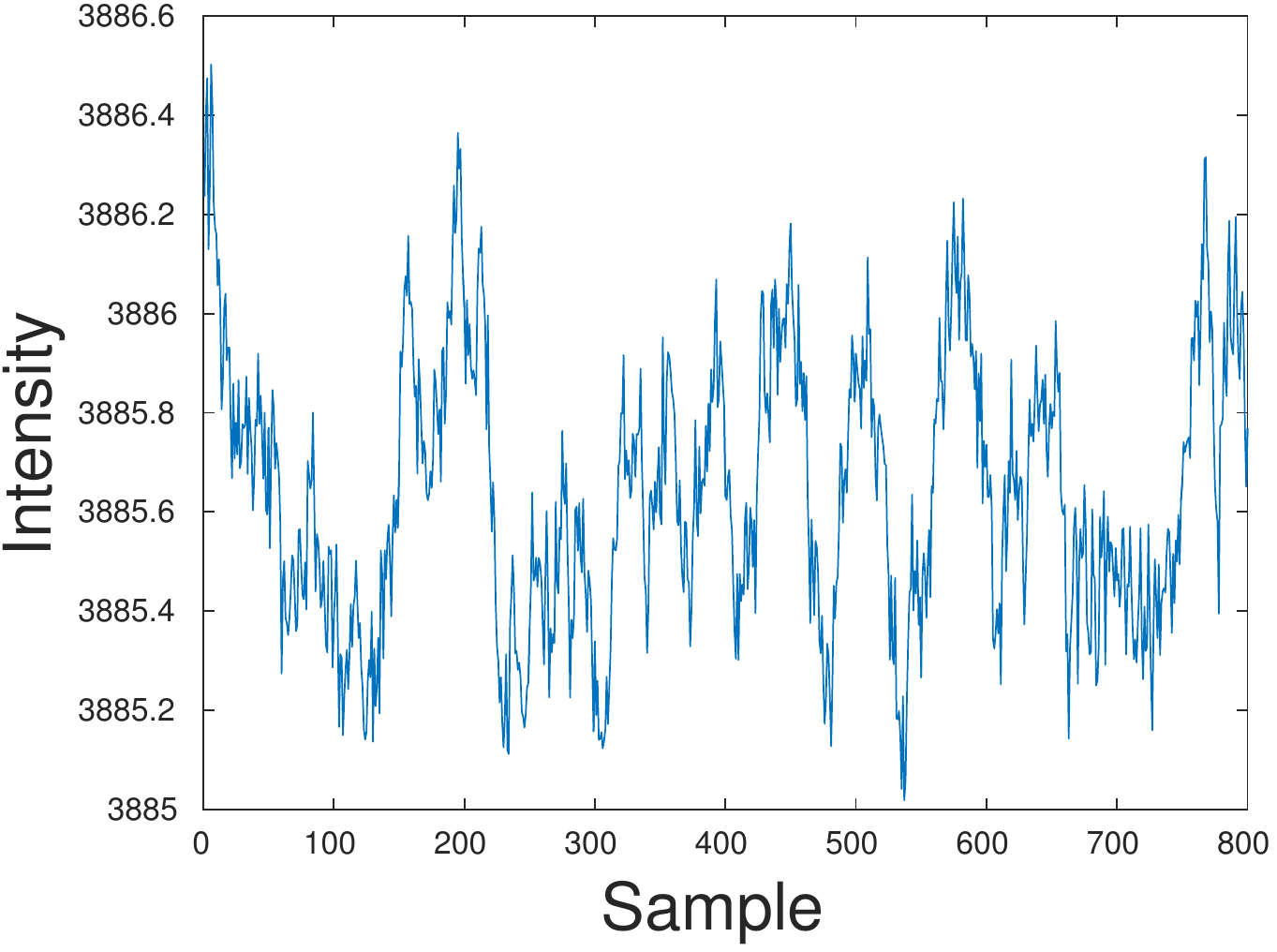}
	\end{minipage}
	\hspace{.5cm}
	\begin{minipage}[c]{.28\textwidth}
		\centering
		\includegraphics[width=\linewidth]{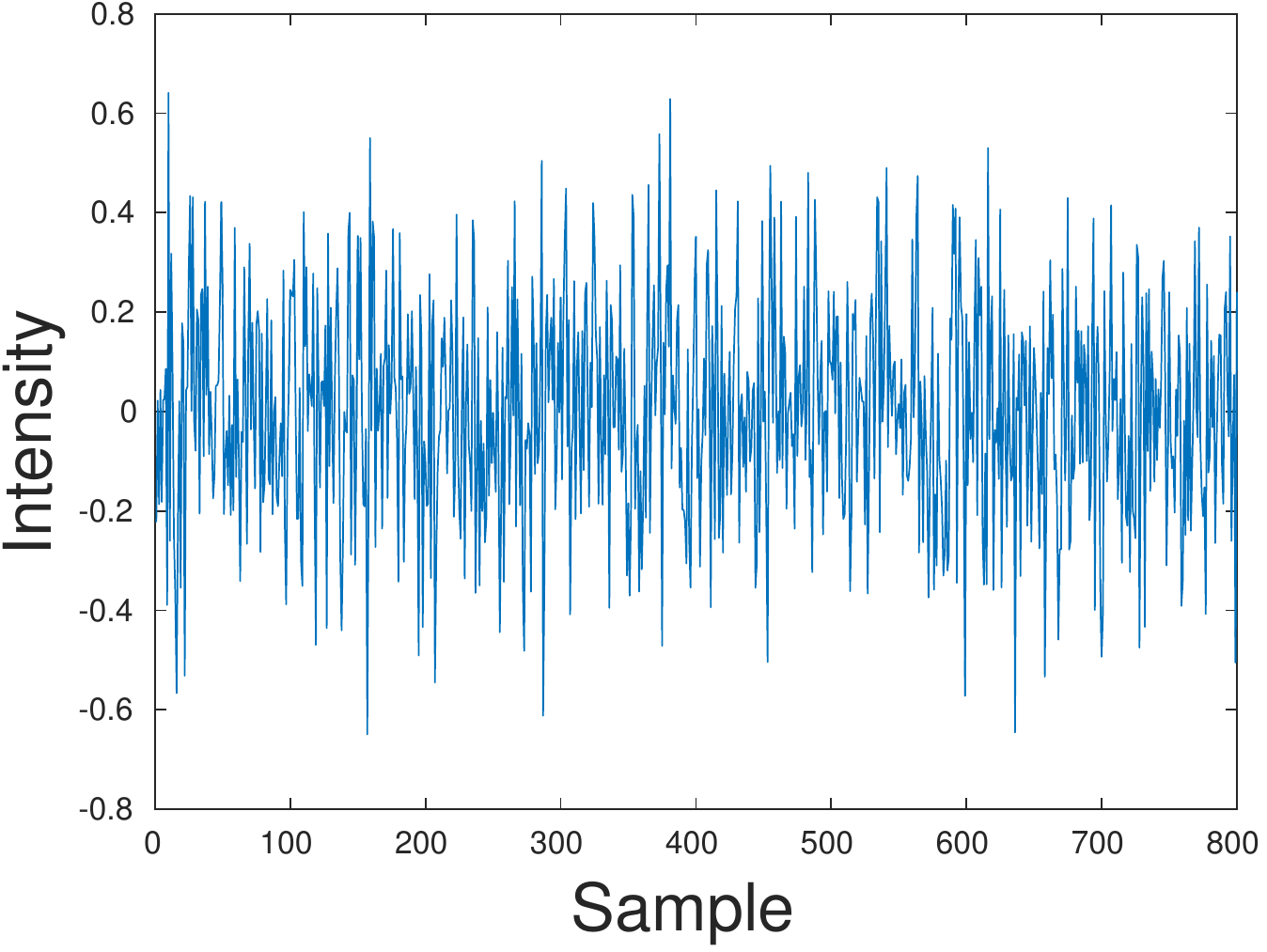}
	\end{minipage}
	\caption{On  the left we can see the total signal measured at a fixed pixel. If decomposed into the one corresponding to the collagen structures and metabolic signal, we obtain the other two images.}
	\label{fig:signal}
\end{figure}

In the following, we consider a two-dimensional 21x21 grid of pixels. The collagen signal, albeit being generated by an independent random media, has the same parameters everywhere, thus sharing a similar behavior. In Figure \ref{fig:metabolicMap}, we present the considered metabolic activity intensity map and two snapshots at different times of the total signal.

\begin{figure}[h!]
	\centering
	\begin{minipage}[c]{.28\textwidth}
		\centering
		 \scriptsize Metabolic activity map \ \ \
	\end{minipage}
	\hspace{.5cm}
	\begin{minipage}[c]{.28\textwidth}
		\centering
		\scriptsize Snapshot of measurements \ \ \
	\end{minipage}
	\hspace{.5cm}
	\begin{minipage}[c]{.28\textwidth}
		\centering
		\scriptsize Snapshot of measurements \ \ \
	\end{minipage}
	\\
	\begin{minipage}[c]{.28\textwidth}
		\centering
		\includegraphics[width=\linewidth]{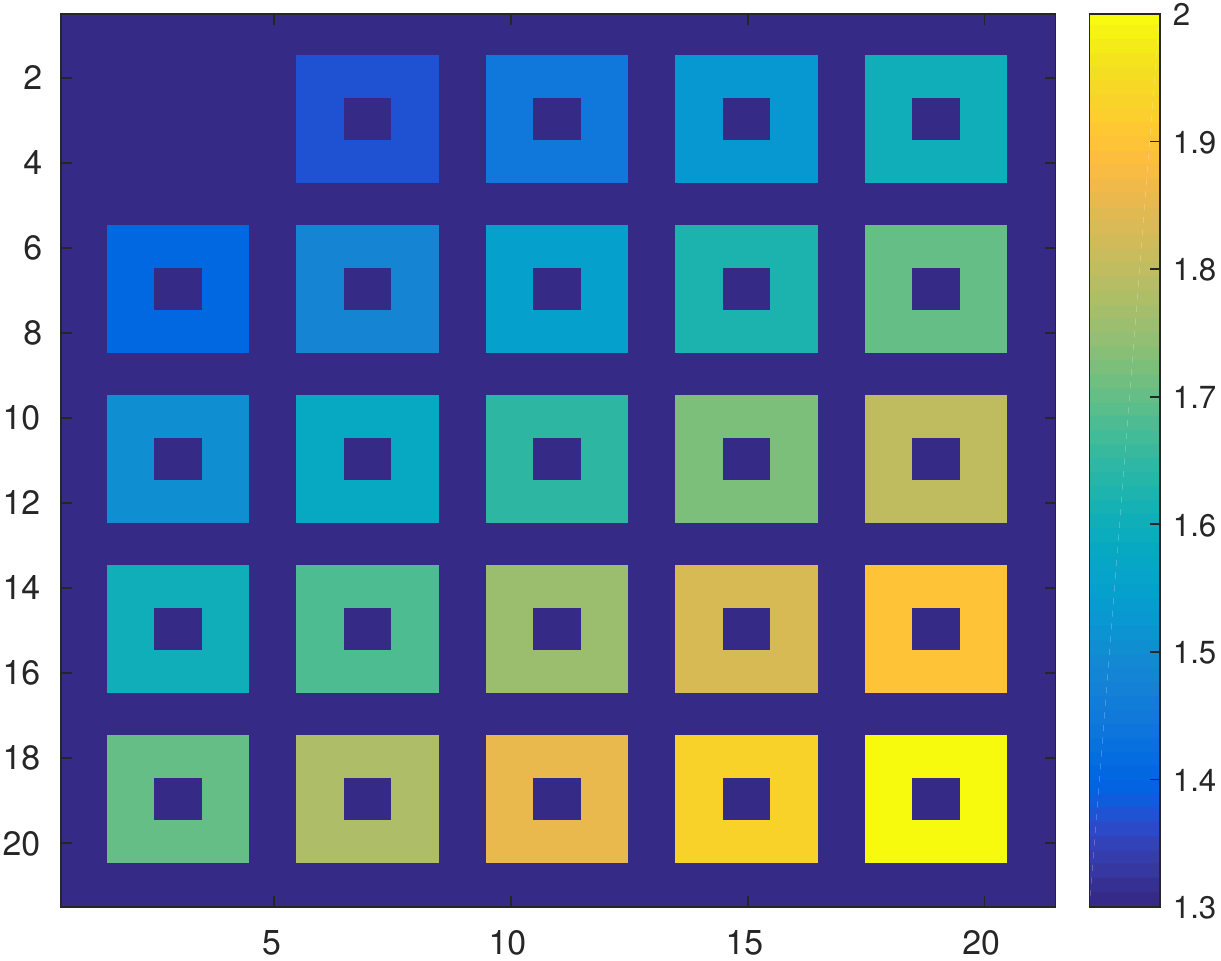}
	\end{minipage}
	\hspace{.5cm}
	\begin{minipage}[c]{.28\textwidth}
		\centering
		\includegraphics[width=\linewidth]{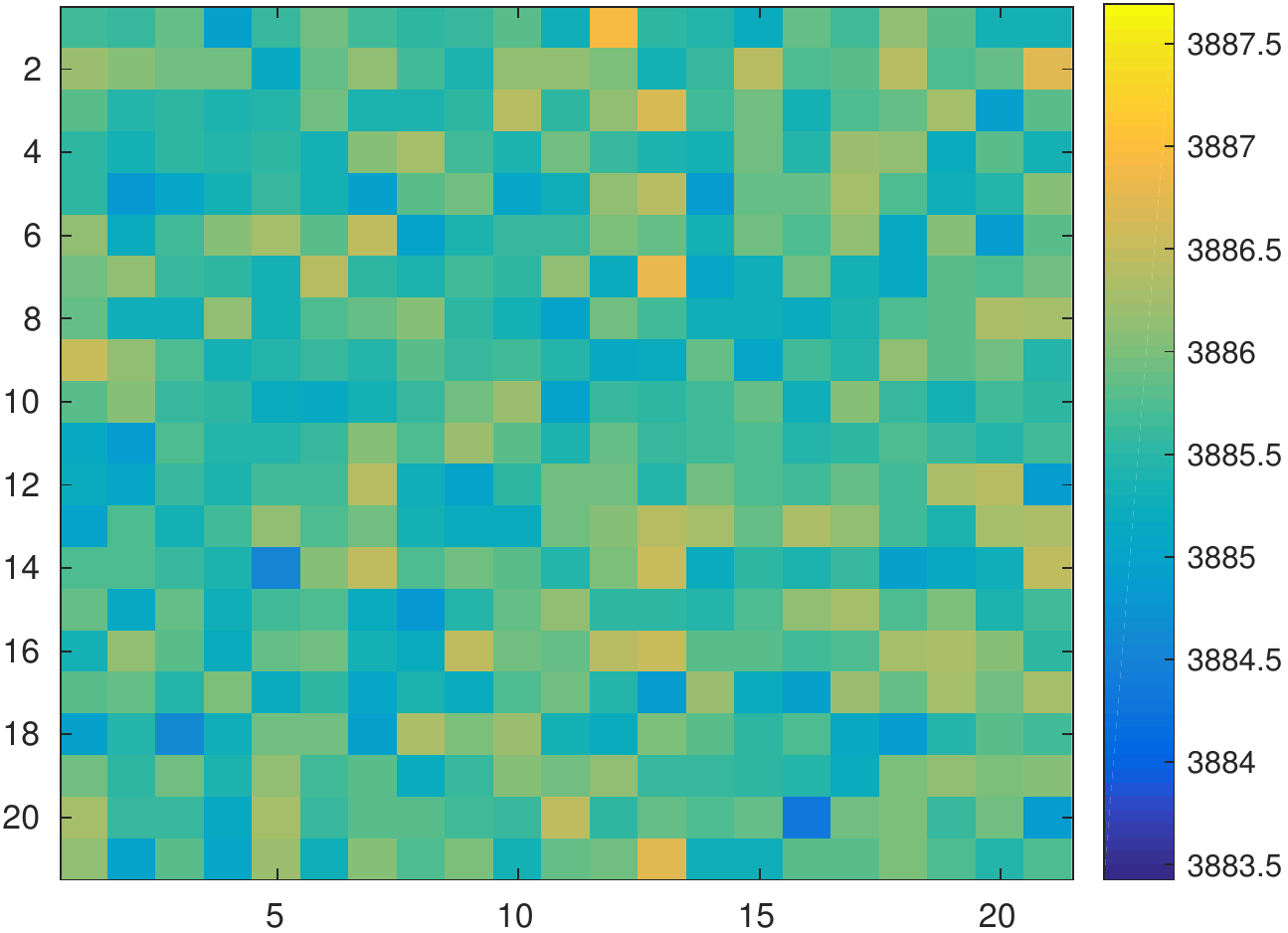}
	\end{minipage}
	\hspace{.5cm}
	\begin{minipage}[c]{.28\textwidth}
		\centering
		\includegraphics[width=\linewidth]{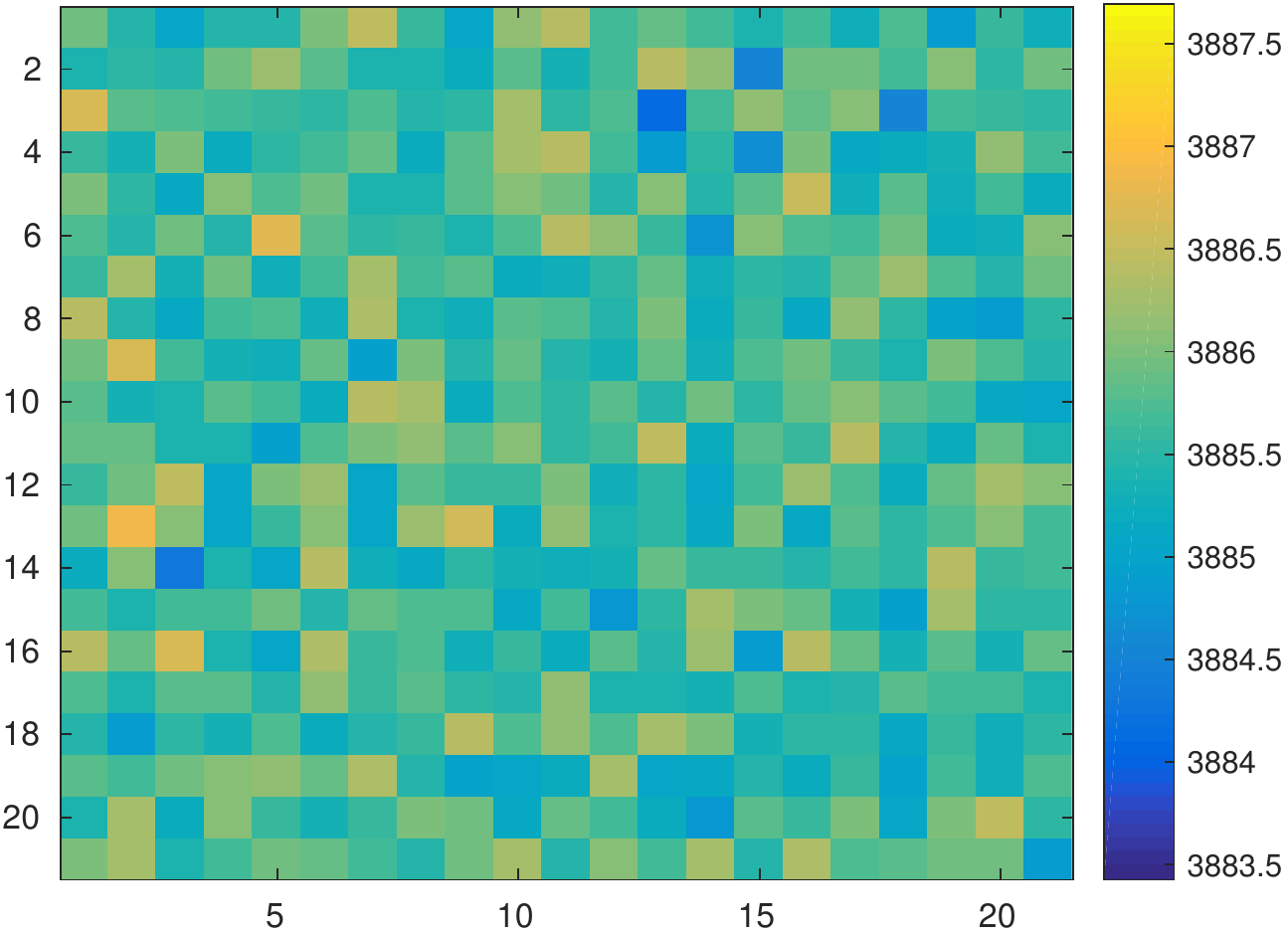}
	\end{minipage}
	\caption{On  the left we can see the considered metabolic map, it describes the intensity of the metabolic signal presented in Figure \ref{fig:signal}. The other two images correspond to raw sampling of the media at different times.}
	\label{fig:metabolicMap}
\end{figure}

\subsection{SVD of the measurements}

To use the singular value decomposition on the signal, we reshape the raw data $\Gamma_{ODT}(x,t)$ under a Casorati matrix form, where the two-dimensional pixels on the $x$ variable are rearranged as a one-dimensional variable, and hence the total signal is written as a matrix $A$ where each dimension corresponds respectively to the space and time variables. The total signal consists on the addition of the metabolic and collagen signals, namely $A = A_m + A_c$. Our objective is to recover the spatial information of the metabolic signal $A_m$.

We apply the SVD decomposition (\ref{svd}) over the total signal $A$, where the dimension of each space corresponds to the amount of pixels of the image and the time samples respectively. Each space vector $\left\{ u_i \right\}$ point out which pixels are participating in the $i$th singular value. To obtain an image of the pixels participating in a particular subset of singular values $ T \subset \mathds{N}$, we use the following formula (see Section 4.2 for why it works):
\begin{equation} \label{eq:reconstructFormula} I(j) = \sqrt{ \sum_{i \in T} \sigma_i^2 u_i(j)^2 }, \end{equation}
where the indices $j$ are for indexing the image's pixels. When the signal has mean 0, formula (\ref{eq:reconstructFormula}) corresponds to the standard deviation that was already considered as an imaging formula in \cite{apelian2016dynamic}. \hid{ Another way to write formula \ref{eq:reconstructFormula} is $ I = \sqrt{ \text{diag}(A A^t ) } $, where diag is a function that given a matrix, returns the diagonal terms as a vector.}

In Figure \ref{fig:spaceVectors}, we can see an image of each space vector $\left\{ u_i \right\}$ ordered by their associated singular value, these vectors correspond to the decomposition of the total signal $A$. The other two pictures on the right of it, correspond to the singular space vectors but for each unmixed signal $A_c$ and $A_m$, separately. As it can be seen, the spatial vectors of both signals get mixed in the total signal, but the metabolic activity ones get embedded in a clustered fashion, although there is a distortion of these vectors, this is unavoidable given the nature of the SVD.

\begin{figure}[h!]
	\centering
	\begin{minipage}[c]{.28\textwidth}
		\centering
	\ \ \	\scriptsize $A$ singular space-vectors
	\end{minipage}
	\hspace{.5cm}
	\begin{minipage}[c]{.28\textwidth}
		\centering
	\ \ \	\scriptsize $A_c$ singular space-vectors
	\end{minipage}
	\hspace{.5cm}
	\begin{minipage}[c]{.28\textwidth}
		\centering
	\ \	\scriptsize $A_m$ singular space-vectors
	\end{minipage}
	\\
	\begin{minipage}[c]{.28\textwidth}
		\centering
		\includegraphics[width=\linewidth]{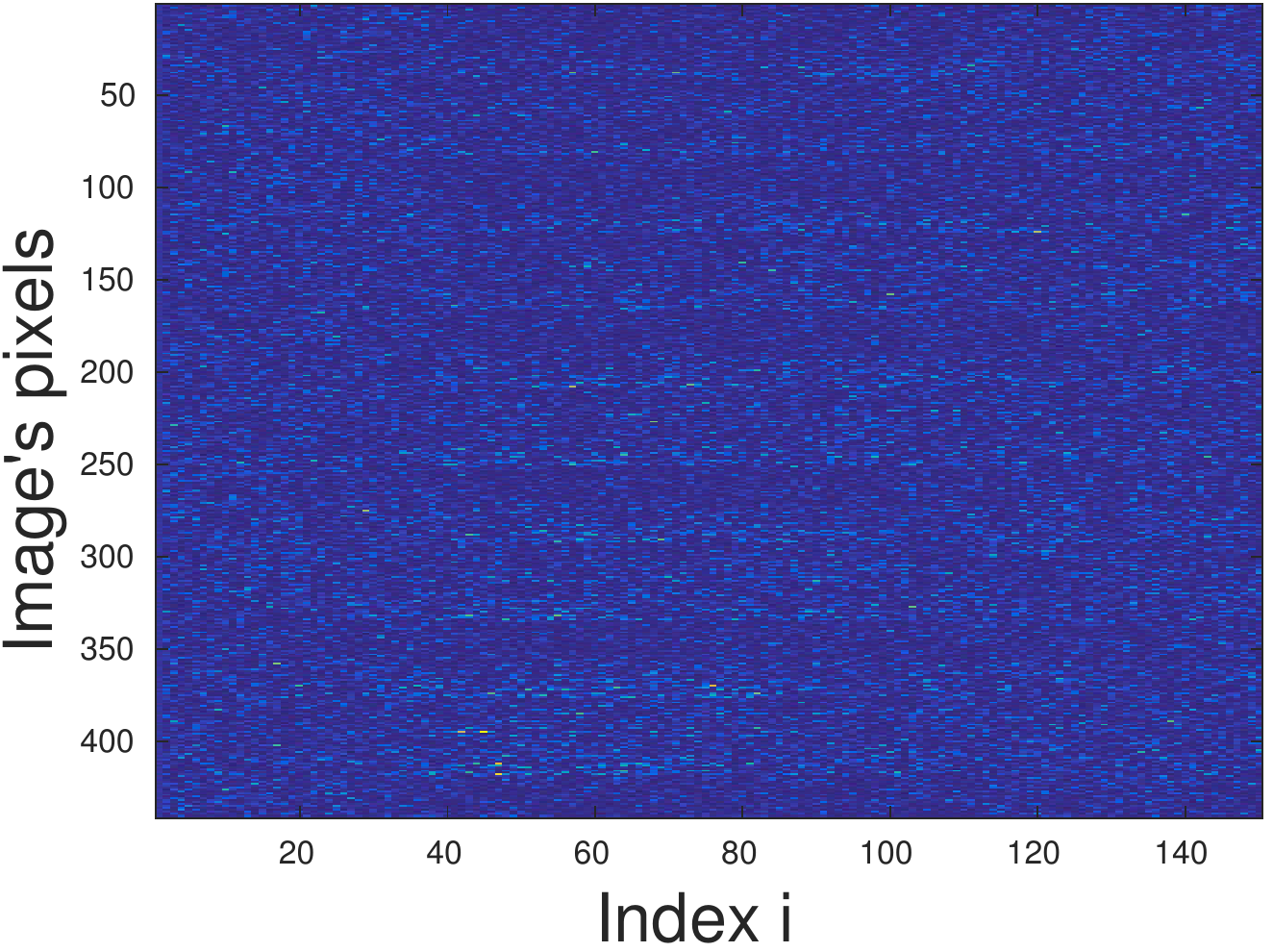}
	\end{minipage}
	\hspace{.5cm}
	\begin{minipage}[c]{.28\textwidth}
		\centering
		\includegraphics[width=\linewidth]{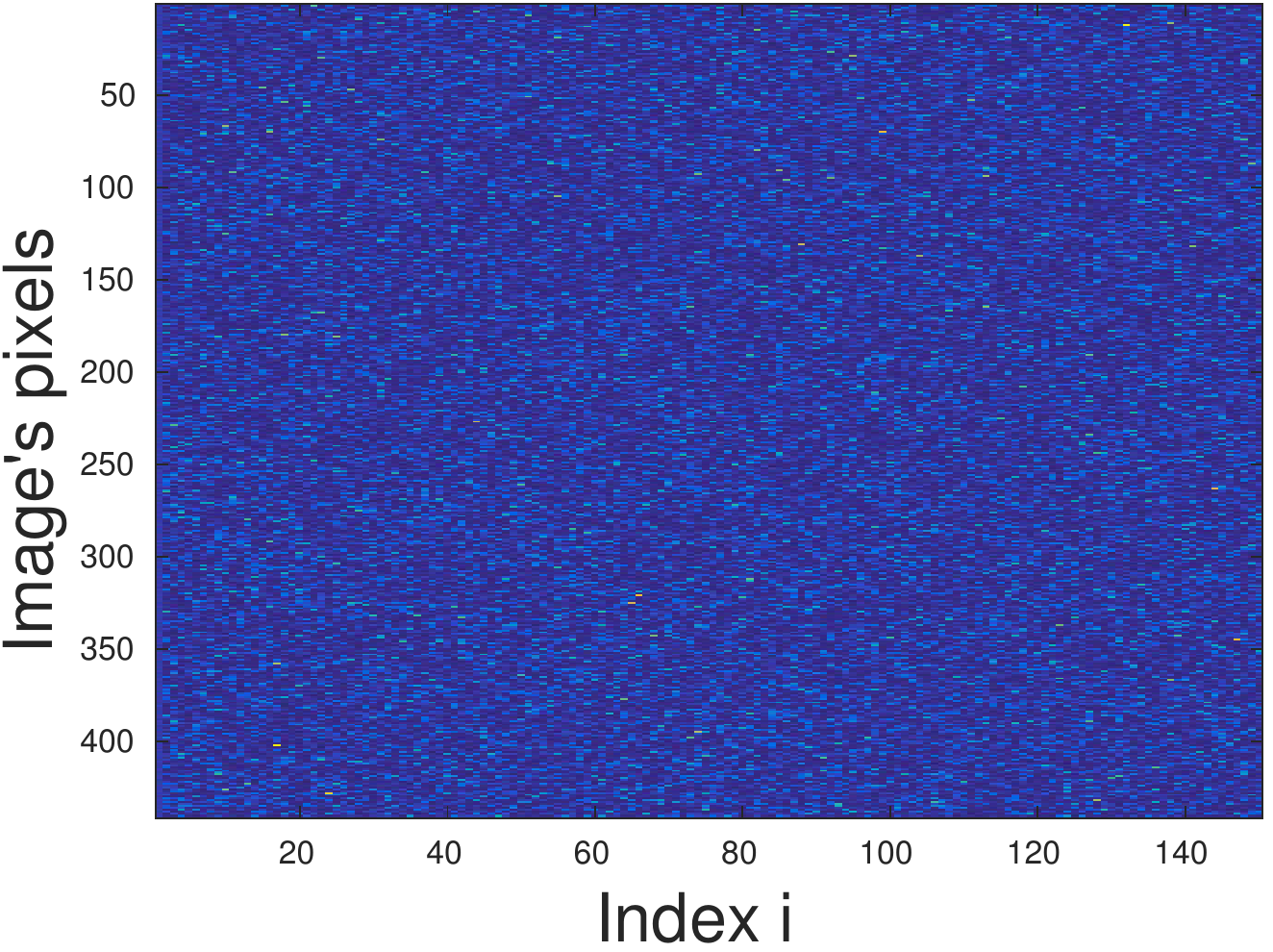}
	\end{minipage}
	\hspace{.5cm}
	\begin{minipage}[c]{.28\textwidth}
		\centering
		\includegraphics[width=\linewidth]{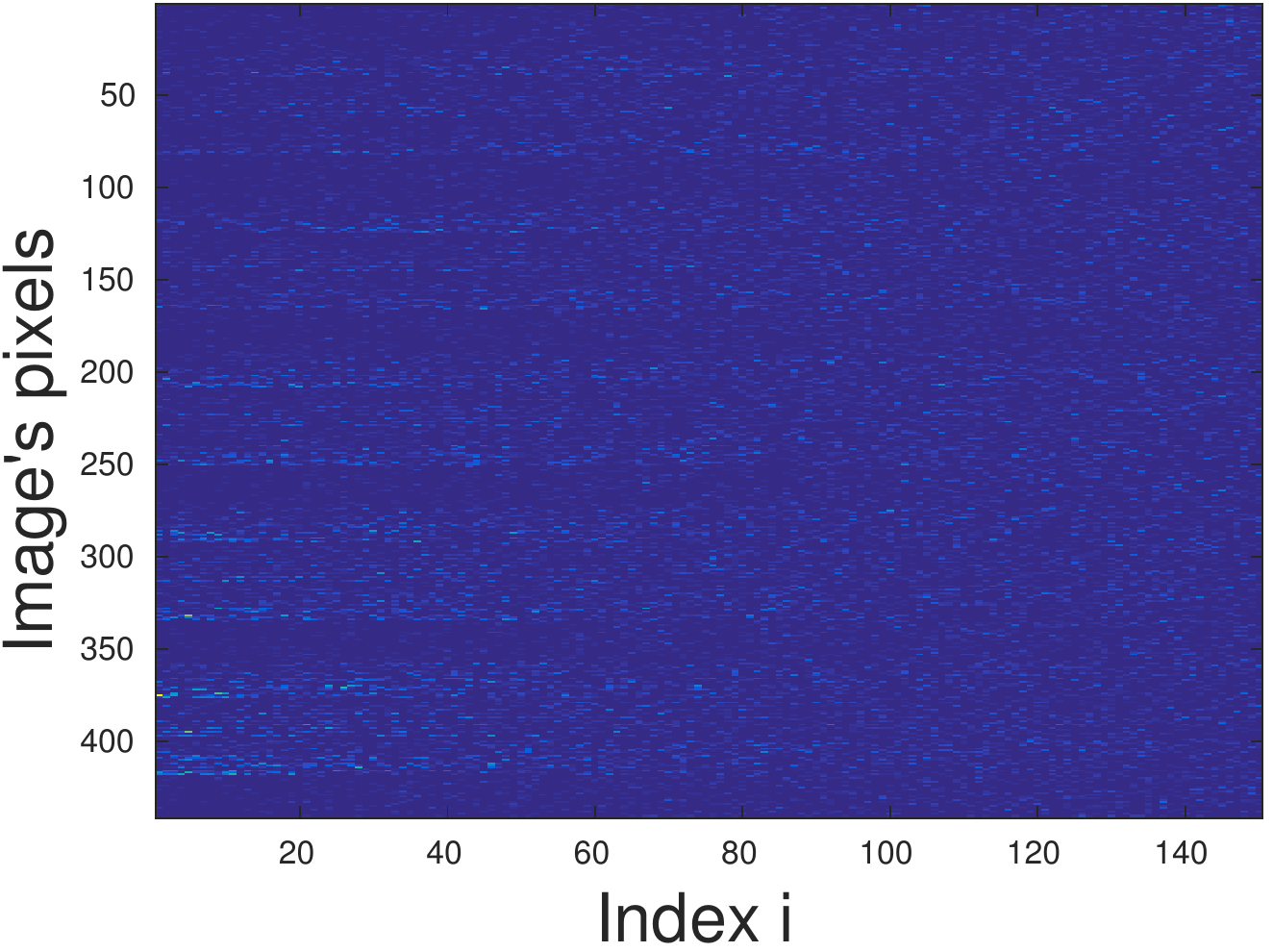}
	\end{minipage}
	\caption{On  the left we can see the singular space vectors of the total signal, ordered by their singular value index and cropped up to the 150th one. On the right we can see the singular space vectors of the decomposed signals: the collagen and the metabolic ones.}
	\label{fig:spaceVectors}
\end{figure}

The location of the spatial vectors is related to their respective singular values, that are presented in Figure \ref{fig:singularValueDecay}. It is observed, that the moment in which the spatial vectors of the total signal start to look like the ones from the metabolic activity, is close to the moment in which the singular values from the metabolic activity get close to those in the total signal. In a mathematical way, we say that the index $j \in \mathds{N}$ in which the spatial vectors $u_i$ start to resemble those of the metabolic activity, corresponds to \begin{equation*} j = \text{argmin} \{ \sigma_j(A) < \sigma_1(A_m) \} - k, \qquad \text{with } k \text{ small.}\end{equation*} In practice, for the tested examples (up to 24x24 grid of pixels, and 500 to 1000 time samples) $k\approx10$ achieve the best results.

The clustered behavior of the singular vectors arise from the model itself, as it generates fast decaying singular values for the collagen signal, whereas the metabolic singular values decay in a more slow fashion. Hence, it is possible to assign an interval of the total signal space vectors as an approximation to the metabolic activity $A_m$.

\begin{figure}[h!]
	\centering
		\includegraphics[width=0.8\linewidth]{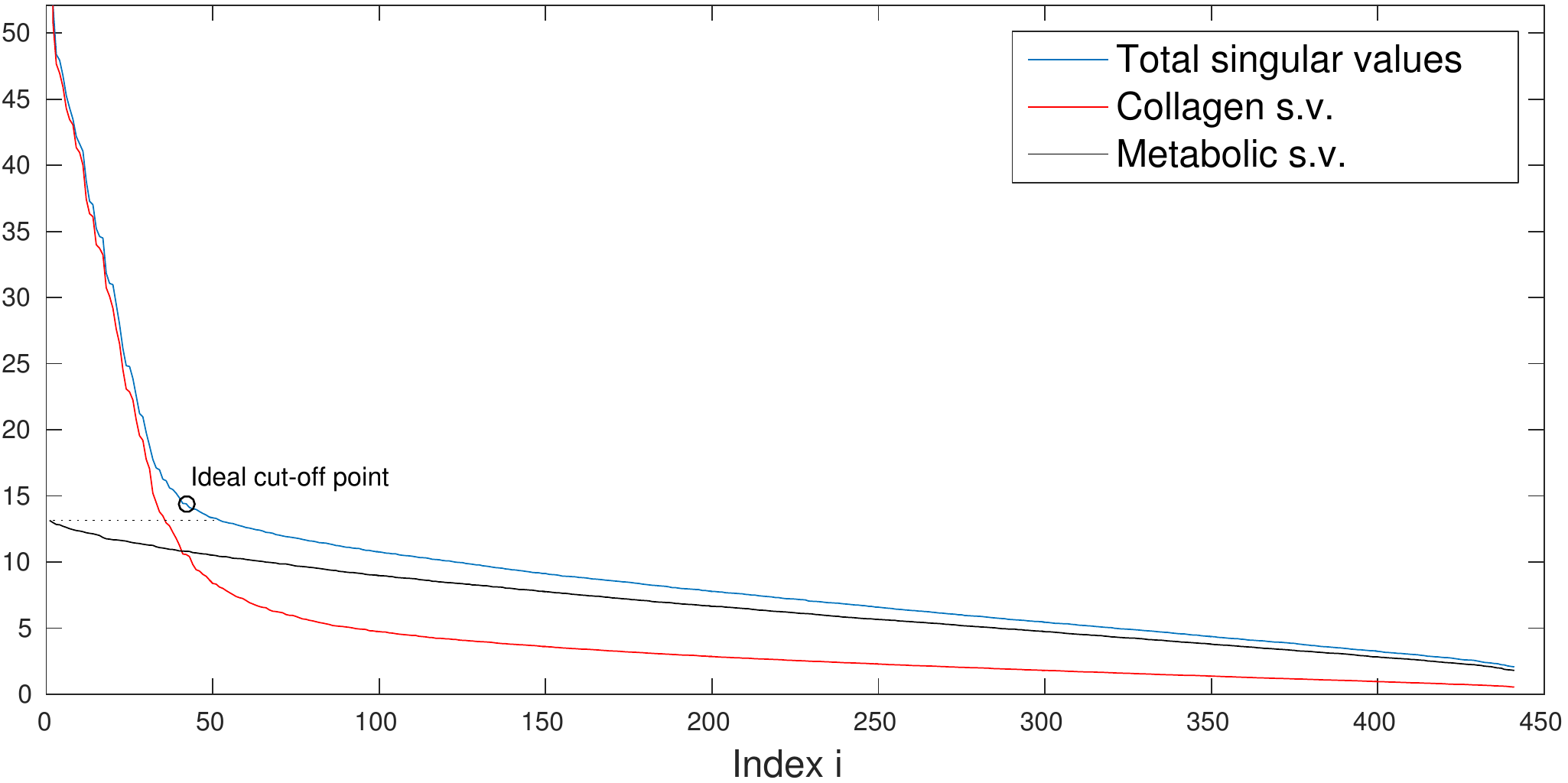}
	\caption{ Singular values for the signals. The circle represents the optimal starting index $j$ at which we should consider the singular space-vectors of the total signal to contain mostly information on the singular space-vectors of the metabolic activity. The first singular value of the total signal and the collagen signal is outside the plot, with an approximate value of $2.3 \times 10^6$.}
	\label{fig:singularValueDecay}
\end{figure}

\subsection{ Selection of cut-off singular value }

\label{subsec:cut-off}

The before mentioned criteria to choose an adequate interval of singular-space vectors to apply the imaging formula (\ref{eq:reconstructFormula}) is not possible in practice, as we have no a priori information on where the metabolic singular values $\sigma_i(A_m)$ lie. Since the idea is to consider an interval of singular space vectors, the first and last elements must be defined. The length of the interval corresponds to the range of the matrix $A_m$, with some added terms coming from the matrix $A_c$. This can be left as a free parameter to be decided by the controller. As a general guideline, it corresponds to the quantity of pixels in which it is expected to find the metabolic activity.

For the considered first singular space vector, also called cut-off one, there is a criteria that arises from the model. Given the differences between the metabolic and collagen signal, the latter in the time variable has some regularity and self correlation. This characteristic is transferred to the first singular time-vectors. In Figure \ref{fig:timeVectors} we can see plots of these time-vectors for each signal.

\begin{figure}[h!]
	\centering
	\begin{minipage}[c]{.28\textwidth}
		\centering
		\tiny Total signal singular time-vector
	\end{minipage}
	\hspace{.5cm}
	\begin{minipage}[c]{.28\textwidth}
		\centering
		\tiny Collagen signal singular time-vector
	\end{minipage}
	\hspace{.5cm}
	\begin{minipage}[c]{.28\textwidth}
		\centering
		\tiny Metabolic signal singular time-vector
	\end{minipage}
	\\
	\begin{minipage}[c]{.28\textwidth}
		\centering
		\includegraphics[width=\linewidth]{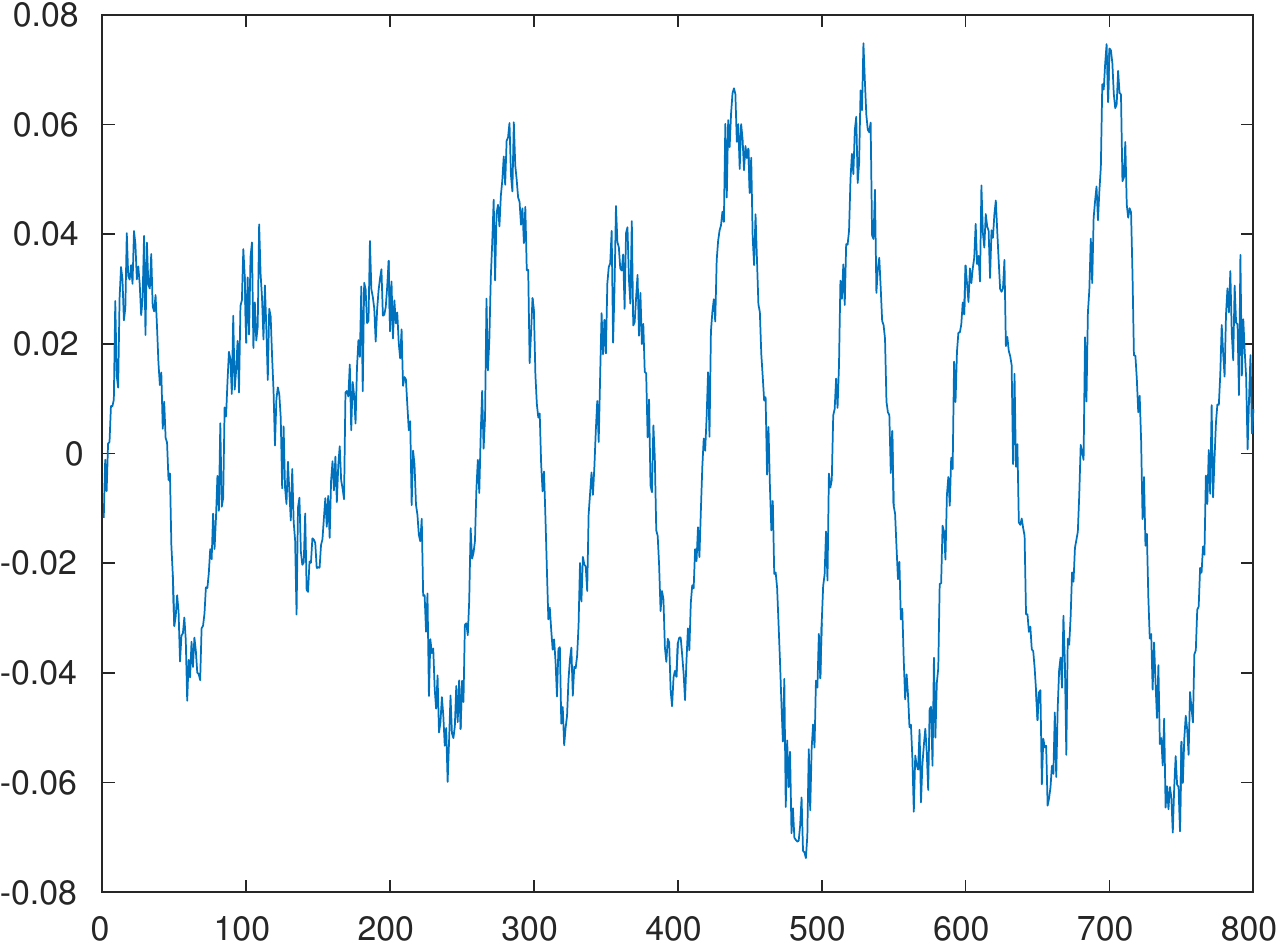}
	\end{minipage}
	\hspace{.5cm}
	\begin{minipage}[c]{.28\textwidth}
		\centering
		\includegraphics[width=\linewidth]{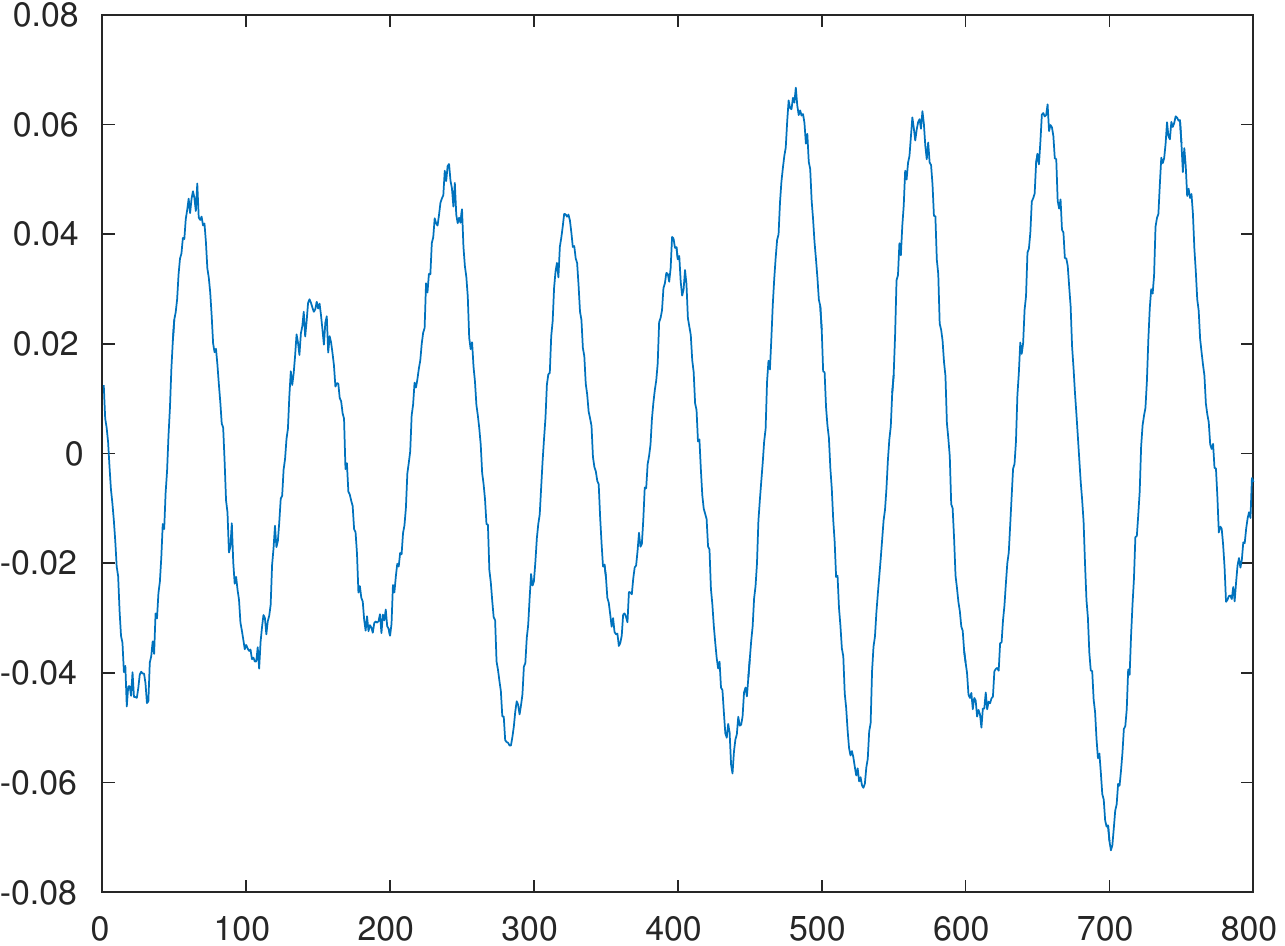}
	\end{minipage}
	\hspace{.5cm}
	\begin{minipage}[c]{.28\textwidth}
		\centering
		\includegraphics[width=\linewidth]{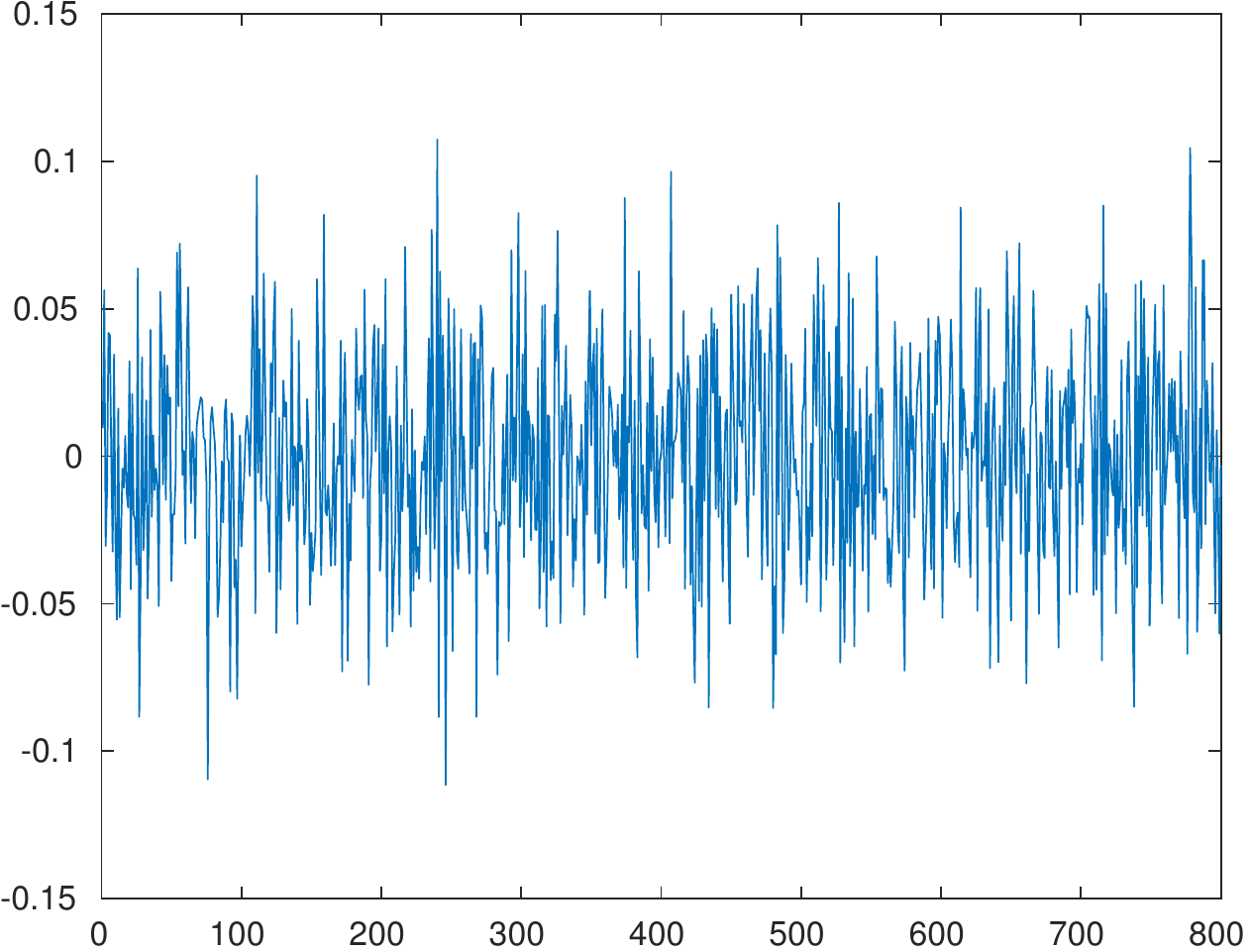}
	\end{minipage}
	\caption{ Plots of the singular time-vectors for each signal, at the second singular value. The collagen time vectors are more regular and correlated compared to the metabolic signal, albeit this property is gradually loosed as we augment the index of the time vectors. Since the SVD of the total signal is dominated by the collagen signal, its time vectors inherit the same property.}
	\label{fig:timeVectors}
\end{figure}

Our proposed technique to decide the cut-off singular space vector consists in measuring the regularity of the time vectors using the total variation semi-norm, the smaller the value the more regular. In the case of a discrete signal, the total variation can be stated as
\begin{equation*}
\left|\ v\ \right|_{\small \text{TV}} = \sum_{i=1}^{N-1} |v(i+1) - v(i)|.
\end{equation*}
Applying the total variation norm to the total signal singular time-vectors $v_i$, we can see that the regularity drops until arriving to, in mean, a slowly increasing plateau. To find it, in an operator free way, it is possible to fit a 2 piece continuous quadratic spline in the total variation plot, and define the cut-off singular value as the point $j$ in which the spline changes. This $l$ is a good approximation for the first singular value of the metabolic activity, meaning that $\sigma_l \approx \sigma_1(A_m)$; see Figure \ref{fig:cutoffTechnique}. Keep in mind that this considered method does not make use of a priori information.

\begin{figure}[h!]
	\centering
	\includegraphics[width=0.8\linewidth]{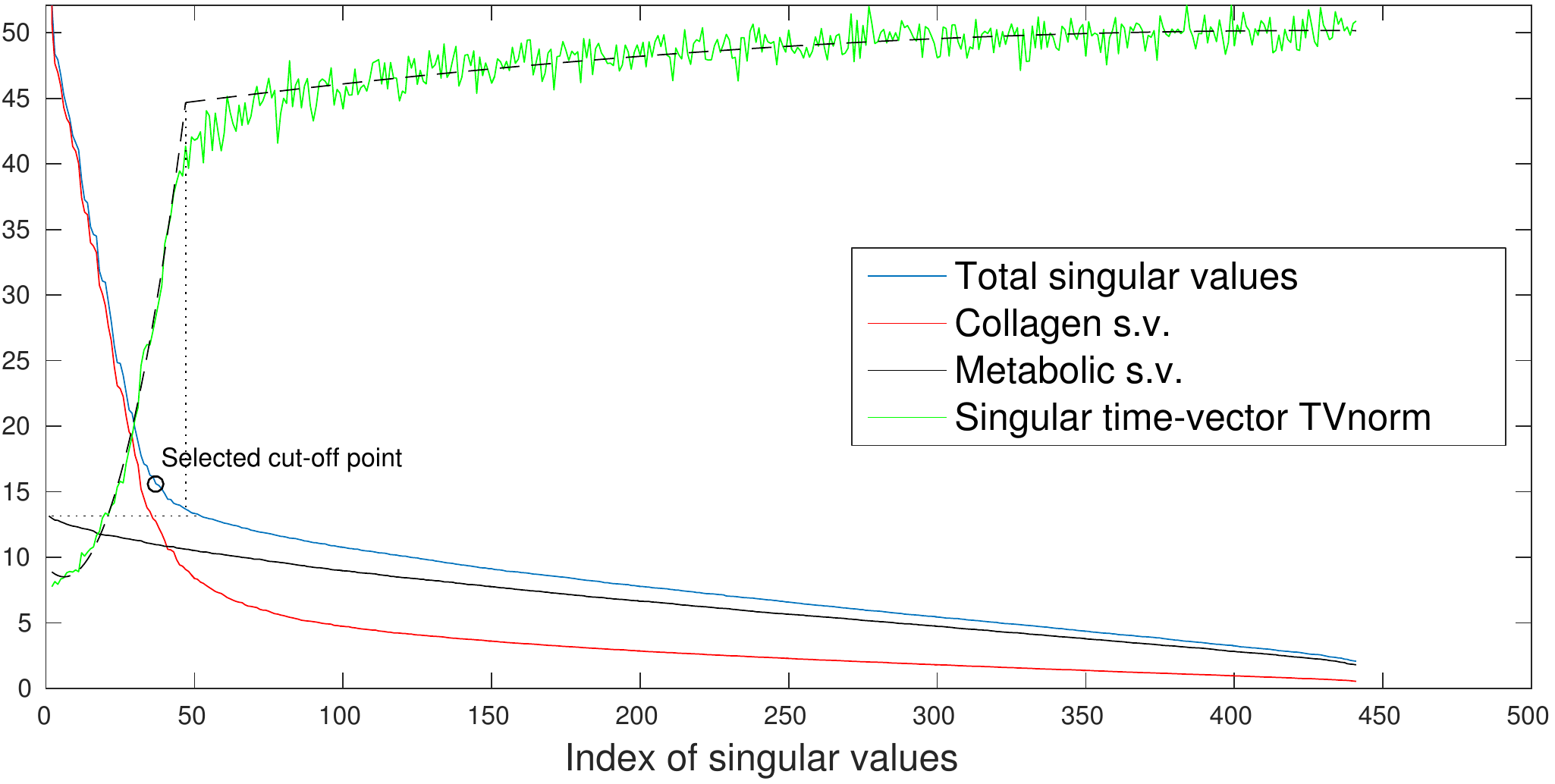}
	\caption{ Same plot as in Figure \ref{fig:singularValueDecay}, but including the total variation of the singular time-vectors of the total signal. The total variation is scaled to fit the plot with the singular values.}
	\label{fig:cutoffTechnique}
\end{figure}

\subsection{ Signal reconstruction }

Employing the cut-off criteria in subsection \ref{subsec:cut-off} and formula \eqref{eq:reconstructFormula} to our simulation, we can reconstruct the metabolic activity. In Figure \ref{fig:reconstructedMetabolic} we have on the left-hand side the best possible reconstruction using the SVD technique, that is the one we could do if we could isolate completely the signal $A_m$ from the total signal $A$. On the right-hand side, we have the actual reconstruction. It is worth mentioning that we are not able to reconstruct the exact metabolic map, as formula (\ref{eq:reconstructFormula}) is used on the simulated media, and thus the image obtained out of the isolated signal $A_m$ is the one we are aiming to reconstruct.

\begin{figure}[h!]
	\centering
	\begin{minipage}[c]{.45\textwidth}
		\centering
		\tiny Best possible reconstruction
	\end{minipage}
	\hspace{.5cm}
	\begin{minipage}[c]{.45\textwidth}
		\centering
		\tiny Achieved reconstruction
	\end{minipage}
	\\
	\begin{minipage}[c]{.45\textwidth}
		\centering
		\includegraphics[width=\linewidth]{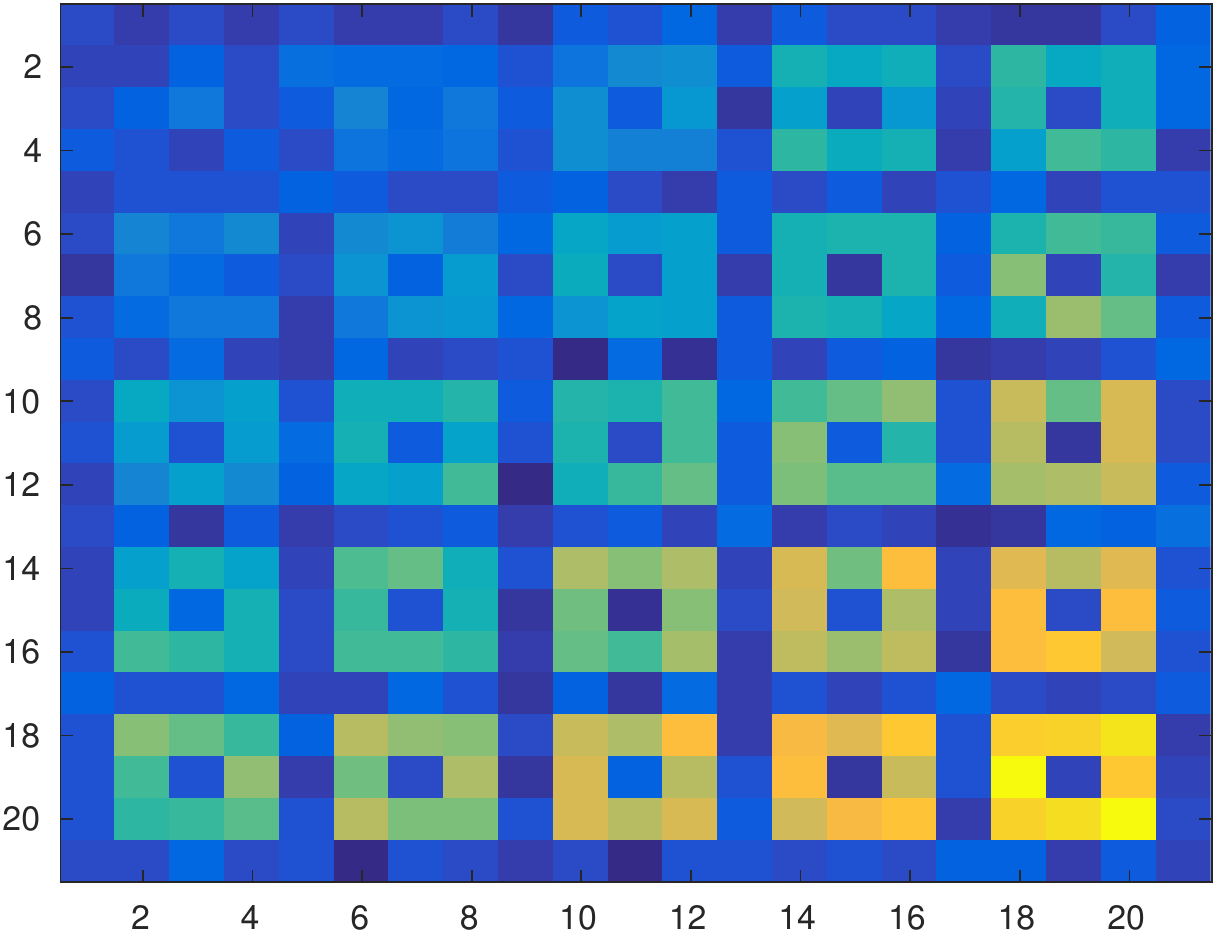}
	\end{minipage}
	\hspace{.5cm}
	\begin{minipage}[c]{.45\textwidth}
		\centering
		\includegraphics[width=\linewidth]{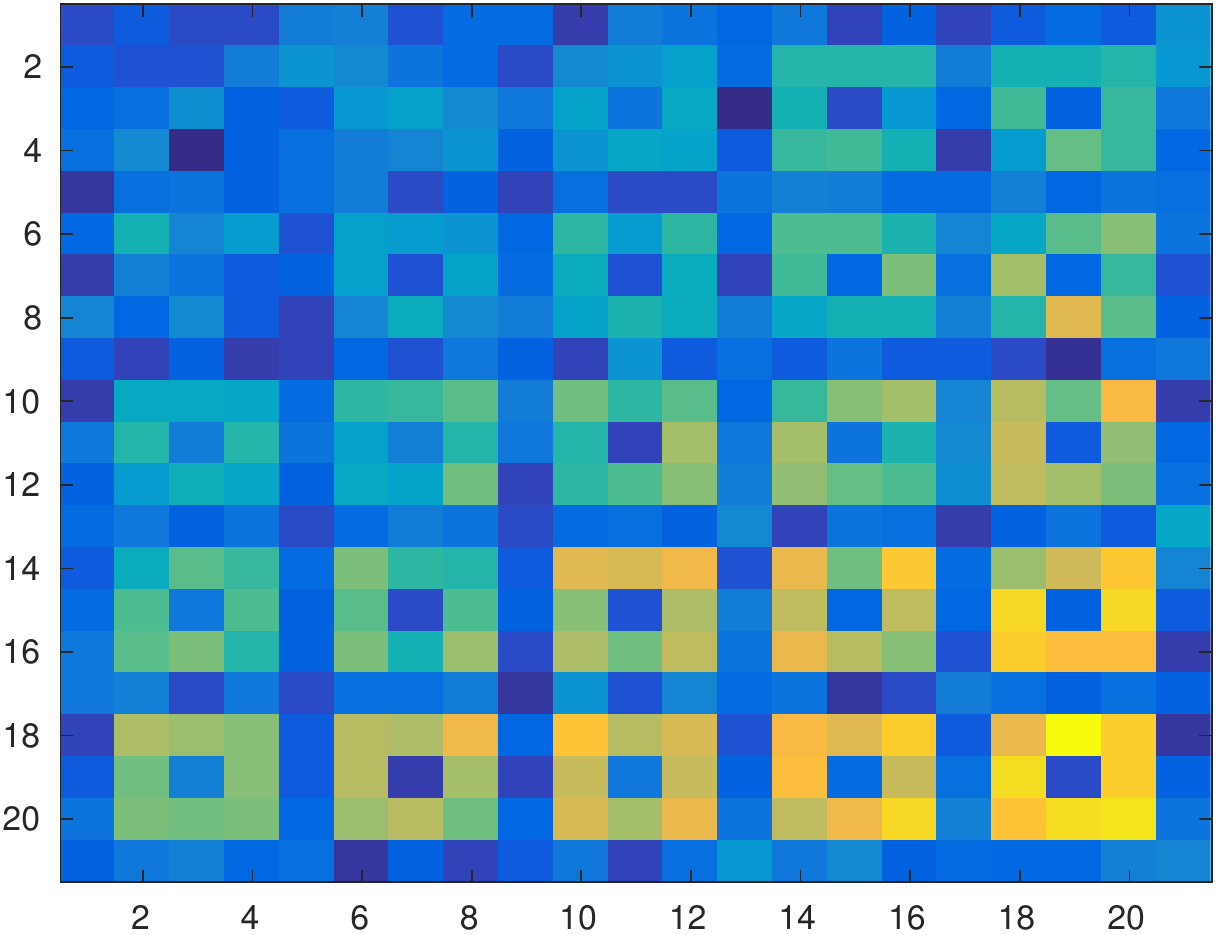}
	\end{minipage}
	\caption{ Reconstruction of the metabolic map presented in Figure \ref{fig:metabolicMap}. The left image correspond to using directly formula (\ref{eq:reconstructFormula}) on the isolated $A_m$ signal. The right-hand side image correspond to using our reconstruction method on the total signal. Once the images are normalized, the committed error with respect to the original metabolic map is $0.011$ and $0.017$, respectively}
	\label{fig:reconstructedMetabolic}
\end{figure}

\subsection{ Discussion and observations}

Since the SVD uses information of all pixels simultaneously to filter out the collagen signal, this technique works better the larger the considered image size is, as the main point is to use the joint information of all the pixels in the image, in contrast to frequency filtering that considers only pointwise information. Numerically, this effect is notorious, as the larger the image size, the more clustered are the singular space-vectors associated to the metabolic activity and thus it is easier to filter out the collagen signal.

With respect to the time samples, it is observed that the filtering process degrades if too many time samples are considered. When this happens (for our 21x21 grid size, this is above 1000 time samples), the singular values of the collagen signal start decaying in a slower rate, accomplishing a less clustered behavior of the metabolic singular space-vectors, and thus achieving a worse signal separation. Hence if there are high available amounts of time samples, one possible recommendation is to do several reconstructions using subsets of these time samples and then averaging the results.

\section{Conclusion}

In this paper, we performed a mathematical analysis of extracting useful information for sub-cellular imaging based on dynamic optical coherence tomography. By using a novel multi-particle dynamical model, we analyzed the spectrum of the operator with the intensity as an integral kernel, and shown that the dominant collagen signal is rank-one.  Therefore, a SVD approach can theoretically separate the metabolic activity signal from the collagen signal. We proved that the SVD eigenvectors are good approximation to the collagen signal, proving that the SVD approach is feasible and reliable as a method to remove the influence of collagen signals. And we also discovered a new formula that gives the intensity of metabolic activity from the SVD analysis. This is further confirmed by our numerical results on simulated data sets.

\appendix
\section{Non-orthogonality of the eigenvectors of $F_{cc}$, $F_{cm}$, and $F_{mc}$} \label{App:AppendixA}

In this appendix we will illustrate the fact that the eigenvectors of the kernels $F_{cc}(x, y)$, $F_{cm}(x, y)$ and $F_{mc}(x, y)$ are in general not orthogonal. Since all of them have variable separable forms with respect to $x$ and $y$, which is the basis of our analysis, so here we only prove the nonorthogonality between eigenvectors of the kernels $F_{cc}(x, y)$ in (\ref{kernelFdmm}) and $F_{cm}(x, y)$ in (\ref{kernelFdmc}).

Let $A$ be the matrix obtained from discretizing the signal $\Gamma_{ODT}$. The singular values of $A$ are the square roots of the eigenvalues of the matrix $A^*A$, and the singular vectors of $A$ are the corresponding eigenvectors of $A^*A$. We notice that $A^*A$ is a discretization of the integral kernel $F(x,y)$. We first demonstrate the relation between kernels with variable separable forms and eigenvectors.

\begin{lemma}\label{eigenvector-varisepe}
For any function $f(x, y)$ where $x$ and $y$ belong to $R^d$ with $d$ being the space dimension, if there exist functions $f_1(x)$ and $f_2(y)$, such that $f(x, y)=f_1(x)f_2(y)$, then $f_1(x)$ and  $f_2(y)$ are the eigenvectors of the integral operator $T$ with kernel $f(x, y)$.
\end{lemma}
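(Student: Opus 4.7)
The plan is to unpack what ``eigenvectors of $T$'' must mean in this context: since $T$ is in general not self-adjoint (and $f_1,f_2$ live on different copies of $\mathbb{R}^d$), the natural reading is that $f_1$ is the left singular vector and $f_2$ the right singular vector of $T$, i.e.\ the nontrivial eigenvectors of $TT^*$ and $T^*T$ respectively. So I would first write out the action of $T$ explicitly on an arbitrary test function $g$,
\begin{equation*}
(Tg)(x)=\int f_1(x)f_2(y)g(y)\,dy=f_1(x)\,\langle g,\overline{f_2}\rangle,
\end{equation*}
which shows directly that $T$ has rank one with range spanned by $f_1$. This is the key structural fact from which everything else is a one-line calculation.

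Next I would specialize: taking $g=f_2$ gives $Tf_2=\|f_2\|_{L^2}^2\,f_1$, and computing the adjoint kernel $\overline{f(y,x)}=\overline{f_2(x)}\,\overline{f_1(y)}$ yields $T^*f_1=\|f_1\|_{L^2}^2\,\overline{f_2}$. Composing these two identities produces
\begin{equation*}
TT^*f_1=\|f_1\|_{L^2}^2\,\|f_2\|_{L^2}^2\,f_1,\qquad T^*Tf_2=\|f_1\|_{L^2}^2\,\|f_2\|_{L^2}^2\,f_2,
\end{equation*}
so $f_1$ and $f_2$ are eigenvectors of $TT^*$ and $T^*T$ respectively with the same eigenvalue $\|f_1\|_{L^2}^2\|f_2\|_{L^2}^2$. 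Since $T$ is rank one, this is the unique nonzero (singular) eigenvalue, and $f_1$, $f_2$ are, up to normalization, the unique associated left/right singular vectors. This matches exactly how the lemma is used later in the appendix: each of $F_{cc}$, $F_{cm}$, $F_{mc}$ has a variable-separable kernel, so each gives rise to a specific pair of ``eigenvectors'' identified by its factors.

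The proof really is almost a one-liner, so there is no serious obstacle. The only thing that requires care is the wording: in the statement ``$f_1(x)$ and $f_2(y)$ are the eigenvectors of $T$'' one must read $f_1$ as a function of the output variable and $f_2$ as a function of the input variable, and interpret ``eigenvector'' in the singular-vector sense indicated above. Once this is pinned down, the whole argument reduces to the two displayed identities $Tf_2\propto f_1$ and $T^*f_1\propto\overline{f_2}$, with the rank-one observation supplying uniqueness. I would therefore keep the proof short and emphasize the rank-one structure, since that is the property actually invoked in the subsequent non-orthogonality discussion.
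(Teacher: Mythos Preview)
Your argument is correct, but it is not the paper's argument. The paper does not pass to $TT^*$ and $T^*T$ at all: it simply writes
\[
(Th)(x)=\int f_1(x)f_2(y)h(y)\,dy=f_1(x)\int f_2(y)h(y)\,dy,
\]
observes that the range of $T$ is the line spanned by $f_1$, and therefore $Tf_1=\bigl(\int f_1 f_2\bigr)f_1$, so $f_1$ is literally an eigenvector of $T$ with eigenvalue $\int f_1 f_2$. It then asserts the same for $f_2$ ``similarly'' (implicitly passing to the transposed operator). Your reading of ``eigenvector'' as \emph{singular vector} is a reasonable disambiguation and is mathematically cleaner for a non-symmetric kernel, since it gives $f_2$ an honest home as an eigenvector of $T^*T$; it also yields the singular value $\|f_1\|\,\|f_2\|$ explicitly, which is pleasant. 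The cost is a slightly longer detour through $TT^*$ and $T^*T$ when the paper's one-line computation already delivers the only fact actually used downstream (the rank-one factor $f_1$ identifies the ``eigenvector'' of each separable kernel).
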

\begin{proof}
Define the operator $T$ with the kernel $f(x, y)$ to be $(Th)(x)=\int f(x, y)h(y)dy$. Using the variable separation $f(x, y)=f_1(x)f_2(y)$, we obtain
\begin{equation*}
(Th)(x)=\int f_1(x)f_2(y)h(y)dy=f_1(x)\int f_2(y)h(y)dy.
\end{equation*}
Therefore, it is clear that the operator $T$ has eigenvector $f_1(x)$, where $\int f_2(y)f_1(y)dy$ is the associated eigenvalue. Similarly, $f_2(y)$ is also the eigenvector of $T$, where  $\int f_1(x)f_2(x)dx$ is the associated eigenvalue.
\end{proof}

Denote the functions $\varphi^{c}(x)$ and $\varphi^{m}(x)$ by
\begin{align*}
\begin{split}
\varphi^{c}(x)&=K_{c_1}(x)q_c(x),\\
\varphi^{m}(x)&=\int_{[-L,L]^2\times[0, T]}\mathcal{F}(S_{0}K_{c_2})(\frac{4\pi \bar{n}z_2}{c})\\
&\times \mathcal{F}(S_{0}K_{m})(x,-\frac{4\pi \bar{n}z_1}{c})p_m(x, z_1, t)dz_1dz_2 dt
\end{split}
\end{align*}

Then the kernels $F_{cc}$ and $F_{cm}$ can be written as
\begin{align*}
\begin{split}
F_{cc}(x,y)&=C_1\varphi^{c}(x)\varphi^{c}(y),\\
F_{cm}(x,y)&=C_2\varphi^{c}(x)\varphi^{m}(y),
\end{split}
\end{align*}
where $C_1$ and $C_2$ are constants.

Applying Lemma \ref{eigenvector-varisepe} to the kernels $F_{cc}$ and $F_{cm}$, we know that the corresponding eigenvectors are $\varphi^{c}$ and $\varphi^{m}$ respectively.

Since this integral $\int\varphi^{c}(x)\varphi^{m}(x)dx$ depends much on the random term $p_m(x,z,t)$, it will not be zero almost all of the time. Hence, in our construction, the vectors $\varphi^{c}$ and $\varphi^{m}$ are in general not orthogonal.

\bibliographystyle{siam}
\bibliography{mc}

\begin{thebibliography}{10}

\bibitem{separation1}
{\sc G.~S. Alberti and H.~Ammari}, {\em Disjoint sparsity for signal separation
  and applications to hybrid inverse problems in medical imaging}, Appl.
  Comput. Harmon. Anal., DOI:10.1016/j.acha.2015.08.013 (2016).

\bibitem{separation3}
{\sc G.~S. Alberti, H.~Ammari, B.~Jin, J.~K. Seo, and W.~Zhang}, {\em The
  linearized inverse problem in multifrequency electrical impedance
  tomography}, SIAM J. Imag. Sci. (arXiv:1602.04312),  (2016).

\bibitem{separation2}
{\sc G.~S. Alberti, H.~Ammari, F.~Romero, and T.~Wintz}, {\em Mathematical
  analysis of ultrafast ultrasound imaging}, arXiv:1604.04604.

\bibitem{apelian2016dynamic}
{\sc C.~Apelian, F.~Harms, O.~Thouvenin, and A.~C. Boccara}, {\em Dynamic full
  field optical coherence tomography: subcellular metabolic contrast revealed
  in tissues by temporal analysis of interferometric signals}, Biomed Opt
  Express, 24 (2016), pp.~1511--1524.

\bibitem{chen1999optical}
{\sc Z.~P. Chen, Y.~H. Zhao, S.~M. Srinivas, S.~J. Nelson, N.~Prakash, and
  R.~D. Frostig}, {\em Optical doppler tomography}, Selected Topics in Quantum
  Electronics, 5 (1999), pp.~1134--1142.

\bibitem{drexler2008optical}
{\sc W.~Drexler and J.~G. Fujimoto}, {\em Optical coherence tomography:
  technology and applications}, Springer Science \& Business Media, 2008.

\bibitem{fercher1996optical}
{\sc A.~F. Fercher}, {\em Optical coherence tomography}, Journal of Biomedical
  Optics, 1 (1996), pp.~157--173.

\bibitem{fercher2003optical}
{\sc A.~F. Fercher, W.~Drexler, C.~K. Hitzenberger, and T.~Lasser}, {\em
  Optical coherence tomography principles and applications}, Reports on
  Progress in Physics, 66 (2003), p.~239.

\bibitem{huang1991optical}
{\sc D.~Huang, E.~A. Swanson, C.~P. Lin, J.~S. Schuman, W.~G. Stinson,
  W.~Chang, M.~R. Hee, T.~Flotte, K.~Gregory, C.~A. Puliafito, et~al.}, {\em
  Optical coherence tomography}, Science, 254 (1991), pp.~1178--1181.

\bibitem{joo2010diffusive}
{\sc C.~Joo, C.~L. Evans, T.~Stepinac, T.~Hasan, and J.~F. de~Boer}, {\em
  Diffusive and directional intracellular dynamics measured by field-based
  dynamic light scattering}, Optics Express, 18 (2010), pp.~2858--2871.

\bibitem{klimevs2002correlation}
{\sc L.~Klime{\v{s}}}, {\em Correlation functions of random media}, Pure and
  Applied Geophysics, 159 (20021969), pp.~1811--1831.

\bibitem{lee2012dynamic}
{\sc J.~Lee, W.~Wu, J.~Y. Jiang, B.~Zhu, and D.~A. Boas}, {\em Dynamic light
  scattering optical coherence tomography}, Optics Express, 20 (2012),
  pp.~22262--22277.

\bibitem{li2009intracellular}
{\sc Y.~Li, J.~Schnekenburger, and M.~Duits}, {\em Intracellular particle
  tracking as a tool for tumor cell characterization}, Journal of Biomedical
  Optics, 14 (2009), pp.~064005--064005.

\bibitem{podoleanu2014optical}
{\sc A.~G. Podoleanu}, {\em Optical coherence tomography}, The British journal
  of Radiology,  (2014).

\bibitem{rellich1969perturbation}
{\sc F.~Rellich}, {\em Perturbation theory of eigenvalue problems}, CRC Press,
  1969.

\bibitem{schmitt1999optical}
{\sc J.~M. Schmitt}, {\em Optical coherence tomography (oct): a review},
  Selected Topics in Quantum Electronics, 5 (1999), pp.~1205--1215.

\bibitem{tomlins2005theory}
{\sc P.~H. Tomlins and R.~K. Wang}, {\em Theory, developments and applications
  of optical coherence tomography}, Journal of Physics D: Applied Physics, 38
  (2005), p.~2519.

\end{thebibliography}

\end{document}